\newtheorem{proposition}{Proposition}[section]
\newtheorem{theorem}[proposition]{Theorem}
\newtheorem{lemma}[proposition]{Lemma}
\newtheorem{remark}[proposition]{Remark}
\newtheorem{corollary}[proposition]{Corollary}
\newtheorem{definition}[proposition]{Definition}
\newcommand{\ZZ}{\mathbf{Z}}
\newcommand{\RR}{\mathbf{R}}
\newcommand{\sett}[1]{\{{#1}\}}
\newcommand{\set}[2]{\{{#1}\,|\,{#2}\}}
\newcommand{\norm}[2][]{\|{#2}\|_{#1}}
\newcommand{\abs}[1]{|{#1}|}
\newcommand{\scp}[3][]{\langle {#2},{#3}\rangle_{#1}}
\newcommand{\bdistance}[3][]{D_{#1}({#2},{#3})}
\newcommand{\closure}[1]{\overline{#1}}
\DeclareMathOperator*{\argmin}{argmin}
\DeclareMathOperator{\dom}{dom}
\DeclareMathOperator{\range}{range}
\DeclareMathOperator{\sign}{sign}
\title{Heuristic parameter-choice rules for convex variational
  regularization based on error estimates}
\author{Bangti Jin\thanks{Zentrum f\"ur Technomathematik,
    Universit\"at Bremen, Fachbereich Mathematik/Informatik, Postfach
    33 04 40, D-28334 Bremen, Germany ({\tt
      btjin@math.uni-bremen.de})} \and Dirk A. Lorenz\thanks{Institute for Analysis and Algebra, Pockelsstr. 14,
    D-38118 Braunschweig, Germany ({\tt d.lorenz@tu-braunschweig.de}).
    Dirk A.~Lorenz is supported by the DFG under grant LO 1436/2-1
    within the Priority Program SPP 1324 ``Extraction of quantitative
    information in complex systems''.}}
\begin{document}
\maketitle
\begin{abstract}
  In this paper, we are interested in heuristic parameter choice rules
  for general convex variational regularization which are based on
  error estimates. Two such rules are derived and generalize those
  from quadratic regularization, namely the Hanke-Raus rule and
  quasi-optimality criterion. A posteriori error estimates are shown
  for the Hanke-Raus rule, and convergence for both rules is also
  discussed. Numerical results for both rules are presented to
  illustrate their applicability.
\end{abstract}



\section{Introduction}
We consider the ill-posed problem of determining a solution $x$ to
\begin{equation}\label{eqn:invprob}
Kx = y^\delta,
\end{equation}
when only a noisy version $y^\delta$ of the exact data $y^\dagger$
is available, which furthermore satisfies an inequality
$\|y^\delta-y^\dagger\|\leq\delta$. In our setting $K:X\to Y$ is a
bounded and linear operator mapping from a Banach space $X$ into a
Hilbert space $Y$.

As usual for inverse problems, the numerical solution of problem
\eqref{eqn:invprob} suffers from ill-posedness. In particular, a
small change in the data $y^\delta$ can lead to an enormous
deviation of the solution $x$. To combat the inherent instability,
regularization has been established as an effective approach since
the pioneering work of Tikhonov \cite{Tikhonov1977illposed}. The
regularization method under consideration is general convex Tikhonov
regularization, i.e.,~for a convex and (weak) lower semicontinuous
functional $R:X\to [0,\infty]$, we seek a minimizer, denoted by
$x_\alpha^\delta$, of the functional
\begin{equation}
  \label{eq:J_alpha}
  \mathcal{J}_\alpha(x)=\tfrac{1}{2}\|Kx-y^\delta\|^2+\alpha R(x),
\end{equation}
and takes the minimizer $x_\alpha^\delta$ as an approximate solution
to the unknown exact solution $x^\dagger$. Here $R$ is the
regularization functional incorporating \textit{a priori}
information, and $\alpha$ is known as the regularization parameter,
determining the tradeoff between the data fitting term and the
regularization term.

Tikhonov regularization formulations of this form have attracted
considerable interest in recent years, and have found applications
in diverse disciplines, e.g., imaging science
\cite{rudin1992tv,chan2006imageprocessing} and signal processing
\cite{donoho2006compressedsensing,candes2005decoding}. Because of
their immense practical importance, the functional
$\mathcal{J}_\alpha$ has been the subject of many recent
investigations. Theoretically speaking, since the pioneering work
\cite{burger2004convarreg}, convergence and convergence rates under
a variety of conditions have been established
\cite{resmerita2005regbanspaces,hofmann2007convtikban,lorenz2008reglp,grasmair2008sparseregularization}.
Numerically, several efficient algorithms have also been proposed
\cite{chambolle2004tvprojection,griesse2008ssnsparsity,Wright2009}.

But one of the most important questions in applying these techniques
to practical problems, i.e., choosing an appropriate regularization
parameter $\alpha$, remains largely underexplored. While the problem
of parameter choice has been discussed in depth for the conventional
quadratic regularization, see e.g., \cite{engl1996inverseproblems}
for theoretical studies and \cite{hansen:1998,vogel:2002inverse} for
details on numerical implementation, the case of general convex
regularization has scarcely been addressed. As to existing studies
on parameter selection for Tikhonov regularization in Banach space,
we are aware of Morozov's discrepancy principle \cite{Morozov1966},
which was recently investigated \cite{Bonesky2009,jin2009c}. Some
theoretical results, e.g., convergence and convergence rates, were
derived. In the latter work, an algorithm for solving the
discrepancy equation was also proposed. However, the discrepancy
principle requires an estimate of the noise level, which is not
always available. Therefore, there is a significant interest in
deriving heuristic choice rules which do not require a knowledge of
the exact noise level and still allow some theoretical
justification. One such rule is due to the authors
\cite{ItoJinZou:2008}, where existence of a solution and a
posteriori error estimates are derived. Another is the balancing
principle, recently derived using the model function approach in
\cite{Jin2009d}, for a model with $L^1$ data fitting and quadratic
regularization. 

In the present study, we shall derive two heuristic choice rules
based on error estimates, which are achieved by a refined analysis of
regularization process. Error estimate-based heuristic choice rules
are well-known for the conventional quadratic regularization
\cite{engl1996inverseproblems}, but to the best of the authors'
knowledge, there is no known rule of this type for general convex
variational regularization. The derived rules generalize Hanke-Raus
rule and quasi-optimality criterion for quadratic regularization to
general convex regularization. Some theoretical justifications,
e.g., existence, a posteriori error estimate and convergence, of
both rules are provided. Numerical results are presented to validate
some theoretical findings and to illustrate the features of both
rules.

\paragraph{Notation:} The linear operator $K:X\to Y$ from a Banach
space $X$ into a Hilbert space $Y$ is assumed to be bounded;
$K^*:Y\to X^*$ denotes its adjoint operator. We assume that the exact
data $y^\dagger$ is attainable, i.e., $y^\dagger\in\range K$, and
the noisy data $y^\delta$ satisfies $\norm{y^\dagger-y^\delta}\leq
\delta$. The functional $R:X\to[0,\infty]$ is assumed to be proper,
convex, weakly lower semicontinuous and coercive. This conditions
ensure that the functional $\mathcal{J}_\alpha$ defined
in~\eqref{eq:J_alpha} possess minimizers (cf.~\cite{hofmann2007convtikban}).
We shall denote by $x_\alpha^\delta$ a minimizer to the
functional $\mathcal{J}_\alpha$,
and by $x_\alpha$ a corresponding
minimizer for exact data $y^\dagger$, i.e.,
\[
x_\alpha \in \argmin\left\{ \tfrac12\norm{Kx-y^\dagger}^2 + \alpha
R(x)\right\}.
\]
By $x^\dagger$ we denote a minimum-$R$ solution of the equation
$Kx=y^\dagger$ (see e.g.~\cite{hofmann2007convtikban}).
With $\partial R(x)$ we denote the subdifferential of a
convex functional $R$ at $x$ \cite{ekelandtemam1976convex}.
Throughout the paper we assume that the exact
solution $x^\dagger$ fulfills the following source condition
(see~\cite{burger2004convarreg}):
\begin{equation}
  \label{eq:source_condition1}
  \exists w: K^*w\in\partial R(x^\dagger).
\end{equation}
For any
$\xi\in\partial R(x)$, we denote the Bregman distance from $x$ to
$x'$ with respect to $\xi$ with
\[
\bdistance[\xi]{x'}{x} = R(x') - R(x) - \scp{\xi}{x'-x}.
\]
We note that the Bregman distance $\bdistance[\xi]{x'}{x}$ is always
nonnegative, although in general it can vanish for distinct $x'$ and
$x$. Bregman distance provides a natural measure of various errors,
and for a detailed discussion, we refer to
\cite{BatnariuResmerita2006}.

\section{Estimates for different errors}
\label{sec:estim-diff-errors}

In the case of regularization in Hilbert spaces, one usually splits
the total error, i.e.,~the distance from $x_\alpha^\delta$ to
$x^\dagger$, into the \emph{approximation error} and the
\textit{data error},  which refer to the distance from $x_\alpha$ to
$x^\dagger$ and that from $x_\alpha^\delta$ to $x_\alpha$,
respectively, cf.~\cite{engl1996inverseproblems}. This is achieved
with the help of a triangle inequality. Then the approximation error
and the data error are estimated separately to get an estimate for
the total error. Theoretically, the behavior of the approximation
error contains information about how difficult it is to approximate
the unknown solution $x^\dagger$ and provides hints on what
conditions on $x^\dagger$ may be helpful. The behavior of the data
error shows how noise influences the accuracy of the reconstruction.

In the case of convex variational regularization one usually
estimates the total error directly. One reason is that in this
setting the natural distance measure for the errors is the Bregman
distance which does not fulfill the triangle inequality. In this
section we provide estimates for different terms. This sheds
insights in the regularization process and thereby shows that a
splitting into approximation and data error is still useful.

\begin{proposition}
Let $x^\dagger$ fulfill the source
condition~\eqref{eq:source_condition1} with $\xi=K^*w$. Then the
approximation error and the corresponding discrepancy satisfy
\begin{gather}\label{eq:approxerror}
\bdistance[\xi]{x_\alpha}{x^\dagger} \leq \frac{\norm{w}^2}{2}\alpha,\\
\label{eq:approxerror_disrc} \norm{Kx_\alpha-y^\dagger}\leq
2\norm{w}\alpha.
\end{gather}
With the choice $\xi_\alpha = -K^*(Kx_\alpha-y^\dagger)/\alpha$, the
data error and the corresponding discrepancy satisfy
  \begin{gather}
    \label{eq:dataerror}
    \bdistance[\xi_\alpha]{x_\alpha^\delta}{x_\alpha}\leq\frac{\delta^2}{2\alpha},\\
    \label{eq:dataerror_disrc}
    \norm{K(x_\alpha^\delta-x_\alpha)}\leq 2\delta.
  \end{gather}
\end{proposition}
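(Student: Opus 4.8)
The plan is to treat the two pairs of estimates separately, since each hinges on a comparison of functional values at the appropriate minimizer, followed by a Young-type inequality.

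For the approximation error I would start from the fact that $x_\alpha$ minimizes $\tfrac12\norm{Kx-y^\dagger}^2 + \alpha R(x)$ and compare it against the competitor $x^\dagger$, using $Kx^\dagger = y^\dagger$. This yields $\tfrac12\norm{Kx_\alpha - y^\dagger}^2 + \alpha(R(x_\alpha) - R(x^\dagger)) \leq 0$. I would then rewrite $R(x_\alpha) - R(x^\dagger)$ in terms of the Bregman distance via the source condition $\xi = K^*w$, using the adjoint relation $\scp{K^*w}{x_\alpha - x^\dagger} = \scp{w}{Kx_\alpha - y^\dagger}$, to obtain $\tfrac12\norm{Kx_\alpha - y^\dagger}^2 + \alpha \bdistance[\xi]{x_\alpha}{x^\dagger} + \alpha\scp{w}{Kx_\alpha - y^\dagger} \leq 0$. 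From this single inequality both \eqref{eq:approxerror} and \eqref{eq:approxerror_disrc} follow: applying Young's inequality to the last term absorbs $\tfrac12\norm{Kx_\alpha-y^\dagger}^2$ and leaves $\alpha\bdistance[\xi]{x_\alpha}{x^\dagger} \leq \tfrac12\alpha^2\norm{w}^2$; alternatively, discarding the nonnegative Bregman term and using Cauchy--Schwarz on the inner product gives $\tfrac12\norm{Kx_\alpha - y^\dagger} \leq \alpha\norm{w}$.

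For the data error, the first point to note is that the choice $\xi_\alpha = -K^*(Kx_\alpha - y^\dagger)/\alpha$ is precisely the subgradient furnished by the optimality condition for $x_\alpha$, so $\xi_\alpha\in\partial R(x_\alpha)$ and the Bregman distance is well defined. I would again compare functional values, now using that $x_\alpha^\delta$ minimizes $\mathcal{J}_\alpha$ against the competitor $x_\alpha$, giving a bound on $\alpha(R(x_\alpha^\delta) - R(x_\alpha))$. Expanding the definition of $\bdistance[\xi_\alpha]{x_\alpha^\delta}{x_\alpha}$ and substituting the explicit form of $\xi_\alpha$ (again via the adjoint relation) turns the inner-product term into $\scp{Kx_\alpha - y^\dagger}{K(x_\alpha^\delta - x_\alpha)}$.

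The main obstacle, and the one delicate computation, is the algebraic rearrangement that follows. I would expand the two discrepancy squares around the residual $Kx_\alpha - y^\delta$ and split $Kx_\alpha - y^\dagger = (Kx_\alpha - y^\delta) + (y^\delta - y^\dagger)$; the cross terms involving $Kx_\alpha - y^\delta$ should cancel exactly, leaving the clean estimate $\alpha\bdistance[\xi_\alpha]{x_\alpha^\delta}{x_\alpha} \leq -\tfrac12\norm{K(x_\alpha^\delta - x_\alpha)}^2 + \scp{y^\delta - y^\dagger}{K(x_\alpha^\delta - x_\alpha)}$. Getting this cancellation right is the crux; everything else is bookkeeping. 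From this inequality, bounding the inner product by $\delta\norm{K(x_\alpha^\delta - x_\alpha)}$ and applying Young's inequality yields \eqref{eq:dataerror}, while simply using nonnegativity of the left-hand side and dividing by $\norm{K(x_\alpha^\delta - x_\alpha)}$ yields the discrepancy bound \eqref{eq:dataerror_disrc}.
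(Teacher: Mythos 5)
Your proposal is correct and follows essentially the same route as the paper: compare functional values at the minimizer against the natural competitor ($x^\dagger$ for the approximation error, $x_\alpha$ for the data error), identify $\xi_\alpha=-K^*(Kx_\alpha-y^\dagger)/\alpha$ as the subgradient from the optimality condition, perform the same cancellation of the cross terms around $Kx_\alpha-y^\delta$, and then extract both estimates of each pair from a single auxiliary inequality via Cauchy--Schwarz and Young. No gaps; the algebraic rearrangement you flag as the crux is exactly the step the paper carries out.
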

\begin{proof}
  Inequalities~\eqref{eq:approxerror} and~\eqref{eq:approxerror_disrc}
  have been shown in~\cite{burger2004convarreg}, however, we include
  a short proof for the sake of completeness. By the minimizing property of
  $x_\alpha$ and the fact $Kx^\dagger = y^\dagger$ we have
  \[
  \tfrac12\norm{Kx_\alpha - y^\dagger}^2 + \alpha R(x_\alpha) \leq \alpha R(x^\dagger).
  \]
  Rearranging the terms and noting $\xi\in\partial R(x^\dagger)$
  yields
  \begin{equation}
    \label{eq:aux_est_approx_error}
    \tfrac12\norm{Kx_\alpha - y^\dagger}^2 + \alpha\bdistance[\xi]{x_\alpha}{x^\dagger}
    \leq -\alpha \scp{\xi}{x_\alpha-x^\dagger}.
  \end{equation}
By observing the non-negativity of the Bregman distance
$\bdistance[\xi]{x_\alpha}{x^\dagger}$ and using $\xi=K^*w$ and
Cauchy-Schwarz inequality, we obtain
  \[
  \tfrac12\norm{Kx_\alpha-y^\dagger}^2 \leq \alpha\norm{w}\norm{Kx_\alpha-y^\dagger}
  \]
  which shows estimate~\eqref{eq:approxerror_disrc}.

Appealing again to inequality~\eqref{eq:aux_est_approx_error} and
using $\xi=K^*w$, Cauchy-Schwarz and Young's inequalities, we arrive
at
  \[
  \tfrac12\norm{Kx_\alpha - y^\dagger}^2 + \alpha\bdistance[\xi]{x_\alpha}{x^\dagger}
  \leq \tfrac{\alpha^2\norm{w}^2}{2} + \tfrac12\norm{Kx_\alpha -
  y^\dagger}^2.
  \]
  This establishes estimate~\eqref{eq:approxerror}.

  Next we use the minimizing property of $x_\alpha^\delta$ to get
  \[
  \tfrac12\norm{Kx_\alpha^\delta-y^\delta}^2 +
  \alpha\bdistance[\xi_\alpha]{x_\alpha^\delta}{x_\alpha} \leq
  \tfrac12\norm{Kx_\alpha-y^\delta}^2 -
  \alpha\scp{\xi_\alpha}{x_\alpha^\delta-x_\alpha}.
  \]
  From the optimality of $x_\alpha$, we have
  $\xi_\alpha=-K^*(Kx_\alpha - y^\dagger)/\alpha\in\partial
  R(x_\alpha)$. Plugging in $\xi_\alpha$ and rearranging the formula gives
  \begin{align*}
    \tfrac12\norm{Kx_\alpha^\delta-y^\delta}^2 +
    \alpha\bdistance[\xi_\alpha]{x_\alpha^\delta}{x_\alpha}
    & \leq \tfrac12\norm{Kx_\alpha-y^\delta}^2 + \scp{Kx_\alpha-y^\dagger}{K(x_\alpha^\delta-x_\alpha)}\\
    & = \tfrac12\norm{Kx_\alpha^\delta - y^\delta}^2 -
    \tfrac12\norm{K(x_\alpha^\delta - x_\alpha)}^2 \\
    & \qquad - \scp{y^\dagger-y^\delta}{K(x_\alpha^\delta - x_\alpha)},
  \end{align*}
  i.e.,
  \begin{equation}\label{eq:aux_est_data_error}
    \tfrac12\norm{K(x_\alpha^\delta - x_\alpha)}^2+
    \alpha\bdistance[\xi]{x_\alpha^\delta}{x_\alpha}
    \leq-\scp{y^\dagger-y^\delta}{K(x_\alpha^\delta - x_\alpha)}.
  \end{equation}
Now the non-negativity of the Bregman distance and Cauchy-Schwarz
inequality yields estimate~\eqref{eq:dataerror_disrc}. Next by
virtue of inequality~\eqref{eq:aux_est_data_error} and
Cauchy-Schwarz and Young's inequalities, we obtain
  \begin{equation*}
    \tfrac12\norm{K(x_\alpha^\delta - x_\alpha)}^2 + \alpha
    \bdistance[\xi]{x_\alpha^\delta}{x_\alpha} \leq
    \delta\norm{K(x_\alpha^\delta - x_\alpha)}\leq \tfrac{\delta^2}{2} +
    \tfrac12\norm{K(x_\alpha^\delta - x_\alpha)}^2
  \end{equation*}
  which concludes the proof.
\end{proof}

From~\cite{burger2004convarreg} we cite the following result.
\begin{proposition}[Estimate for the total error]
  \label{prop:estimate_total_error}
  If the source condition~\eqref{eq:source_condition1} holds with $\xi
  = K^* w \in \partial R(x^\dagger)$, then we have
  \begin{gather}
    \label{eq:totalerror}
    \bdistance[\xi]{x_\alpha^\delta}{x^\dagger} \leq
    \frac12\Bigl(\frac{\delta}{\sqrt{\alpha}} +
    \sqrt{\alpha}\norm{w}\Bigr)^2,\\
    \label{eq_totalerror_disrc}
    \norm{Kx_\alpha^\delta - y^\delta} \leq \delta + 2\alpha\norm{w}.
  \end{gather}
\end{proposition}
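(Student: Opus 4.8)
The plan is to estimate the total error directly from the optimality of $x_\alpha^\delta$ together with the source condition, rather than splitting it into approximation and data errors (which would require a triangle inequality that the Bregman distance does not obey, as noted above). The whole argument rests on a single key inequality, from which both assertions are extracted in slightly different ways.

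First I would invoke the minimizing property $\mathcal{J}_\alpha(x_\alpha^\delta)\leq\mathcal{J}_\alpha(x^\dagger)$. Since $Kx^\dagger=y^\dagger$ and $\norm{y^\dagger-y^\delta}\leq\delta$, this yields
\[
\tfrac12\norm{Kx_\alpha^\delta-y^\delta}^2+\alpha R(x_\alpha^\delta)\leq\tfrac12\delta^2+\alpha R(x^\dagger).
\]
Next I would rewrite the gap via the definition of the Bregman distance, $R(x_\alpha^\delta)-R(x^\dagger)=\bdistance[\xi]{x_\alpha^\delta}{x^\dagger}+\scp{\xi}{x_\alpha^\delta-x^\dagger}$, and then use the source condition $\xi=K^*w$ to convert the subgradient pairing into a pairing in $Y$, namely $\scp{\xi}{x_\alpha^\delta-x^\dagger}=\scp{w}{Kx_\alpha^\delta-y^\dagger}$. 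Writing $Kx_\alpha^\delta-y^\dagger=(Kx_\alpha^\delta-y^\delta)+(y^\delta-y^\dagger)$ and applying Cauchy--Schwarz to both pieces gives the key inequality
\[
\tfrac12\norm{Kx_\alpha^\delta-y^\delta}^2+\alpha\bdistance[\xi]{x_\alpha^\delta}{x^\dagger}\leq\tfrac12\delta^2+\alpha\norm{w}\norm{Kx_\alpha^\delta-y^\delta}+\alpha\norm{w}\delta.
\]

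For the discrepancy bound~\eqref{eq_totalerror_disrc} I would drop the nonnegative Bregman term and read the remainder as a quadratic inequality in $t=\norm{Kx_\alpha^\delta-y^\delta}$, namely $t^2-2\alpha\norm{w}t-\delta^2-2\alpha\norm{w}\delta\leq0$. Its discriminant is the perfect square $4(\alpha\norm{w}+\delta)^2$, so the positive root is exactly $\delta+2\alpha\norm{w}$ (the other being $-\delta$); hence $t$ cannot exceed it, which is~\eqref{eq_totalerror_disrc}. For the total-error bound~\eqref{eq:totalerror}, instead of solving a quadratic I would absorb the cross term by Young's inequality, $\alpha\norm{w}\norm{Kx_\alpha^\delta-y^\delta}\leq\tfrac12\norm{Kx_\alpha^\delta-y^\delta}^2+\tfrac12\alpha^2\norm{w}^2$; this cancels the residual term on the left, and dividing the resulting inequality by $\alpha$ gives $\bdistance[\xi]{x_\alpha^\delta}{x^\dagger}\leq\tfrac{\delta^2}{2\alpha}+\norm{w}\delta+\tfrac12\alpha\norm{w}^2$, which is precisely the expanded form of the right-hand side of~\eqref{eq:totalerror}.

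The computations are elementary, so no single step is a genuine obstacle; the one point requiring care is the cross term $\alpha\norm{w}\norm{Kx_\alpha^\delta-y^\delta}$, which must be handled differently in the two cases --- kept intact and resolved through the quadratic for the discrepancy, but split off by Young's inequality for the Bregman distance. I would also be careful to decompose $Kx_\alpha^\delta-y^\dagger$ rather than working with $Kx_\alpha^\delta-y^\delta$ directly, so that the noise level $\delta$ enters with the correct constants.
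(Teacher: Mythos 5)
Your proof is correct: the key inequality follows exactly as you describe from the minimizing property, the source condition and the decomposition $Kx_\alpha^\delta-y^\dagger=(Kx_\alpha^\delta-y^\delta)+(y^\delta-y^\dagger)$, and both the quadratic-root argument for \eqref{eq_totalerror_disrc} and the Young's-inequality step for \eqref{eq:totalerror} check out. The paper itself gives no proof here (it cites the result from Burger and Osher), but your argument is the standard one and uses precisely the same ingredients as the paper's proof of the preceding proposition on the approximation and data errors, so nothing further is needed.
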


Although the Bregman distance does in general not fulfill the
triangle inequality we see that the total error
$\bdistance{x_\alpha^\delta}{x^\dagger}$ behaves like the sum of the
approximation error $\bdistance{x_\alpha}{x^\dagger}$ and the data
error $\bdistance{x_\alpha^\delta}{x_\alpha}$. Indeed there holds
for $a,b\geq0$ that $(a+b)^2/2 \leq a^2+b^2 \leq (a+b)^2$ and hence,
we see that the estimate~\eqref{eq:totalerror} behaves like the sum
of the estimates~\eqref{eq:approxerror} and~\eqref{eq:dataerror}.

The connection between the total error in the Bregman distance and
the approximation and data errors can be made a bit more precise. To
this end, we utilize the following lemma which is an immediate
consequence of the definition of the Bregman distance:
\begin{lemma}
  \label{lem:triangle_ineq_bregman}
  Let $\xi\in\partial R(x^\dagger)$ and $\zeta\in\partial R(x)$. Then
  there holds for any $x'$ that
  \[
  \bdistance[\xi]{x'}{x^\dagger} = \bdistance[\zeta]{x'}{x} +
  \bdistance[\xi]{x}{x^\dagger} + \scp{\xi-\zeta}{x-x'}.
  \]
\end{lemma}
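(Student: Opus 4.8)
The identity is purely algebraic, so the plan is to expand each of the three Bregman distances appearing on the right-hand side using the definition $\bdistance[\eta]{u}{v} = R(u) - R(v) - \scp{\eta}{u-v}$, and then verify that everything collapses to the left-hand side $\bdistance[\xi]{x'}{x^\dagger}$. No analytic input (convexity, subdifferential inequalities, topology) is needed: the hypotheses $\xi\in\partial R(x^\dagger)$ and $\zeta\in\partial R(x)$ serve only to guarantee that the three Bregman distances are well defined, and play no further role. This matches the remark in the text that the lemma is an immediate consequence of the definition.

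Concretely, I would first write out $\bdistance[\zeta]{x'}{x} = R(x') - R(x) - \scp{\zeta}{x'-x}$ and $\bdistance[\xi]{x}{x^\dagger} = R(x) - R(x^\dagger) - \scp{\xi}{x-x^\dagger}$, and substitute these into the right-hand side together with the pairing $\scp{\xi-\zeta}{x-x'}$. The functional values telescope immediately: the two copies of $R(x)$ cancel, leaving exactly $R(x') - R(x^\dagger)$, which is precisely the $R$-part of $\bdistance[\xi]{x'}{x^\dagger}$.

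The only place that requires care is the bookkeeping of the linear pairings, and this is the step I would treat as the (minor) crux. Collecting all inner-product contributions gives $-\scp{\zeta}{x'-x} - \scp{\xi}{x-x^\dagger} + \scp{\xi}{x-x'} - \scp{\zeta}{x-x'}$. Using bilinearity together with $\scp{\zeta}{x'-x} = -\scp{\zeta}{x-x'}$, the two $\zeta$-terms cancel; the two $\xi$-terms combine, via $-(x-x^\dagger)+(x-x') = x^\dagger - x'$, into $\scp{\xi}{x^\dagger-x'} = -\scp{\xi}{x'-x^\dagger}$. Hence the right-hand side equals $R(x') - R(x^\dagger) - \scp{\xi}{x'-x^\dagger} = \bdistance[\xi]{x'}{x^\dagger}$, which is the left-hand side. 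Since this is a chain of equalities with no inequality used anywhere, I expect no real obstacle beyond careful sign-tracking in the duality pairing.
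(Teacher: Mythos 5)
Your proposal is correct and is exactly the argument the paper intends: the lemma is stated as an immediate consequence of the definition of the Bregman distance (no proof is given in the paper), and your direct expansion with the telescoping of the $R$-values and the cancellation of the pairing terms is precisely that computation.
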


The next result is a consequence of Lemma
\ref{lem:triangle_ineq_bregman}, and will be used frequently.
\begin{corollary}
  \label{cor:splitting_approx_data_error}
  Let the source condition~\eqref{eq:source_condition1} be
  fulfilled. Then with the obvious choices of the
  respective subgradients, there holds
  \[
  \Bigl| \bdistance{x_\alpha^\delta}{x^\dagger} -
  \bigl(\bdistance{x_\alpha^\delta}{x_\alpha} +
  \bdistance{x_\alpha}{x^\dagger}\bigr) \Bigr| \leq 6\norm{w}\delta.
  \]
\end{corollary}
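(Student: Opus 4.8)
The plan is to read the desired inequality as a direct corollary of Lemma~\ref{lem:triangle_ineq_bregman} applied with the two subgradients that are already at hand in our setting. For the distances measured from $x^\dagger$ the natural subgradient is $\xi = K^*w$, furnished by the source condition~\eqref{eq:source_condition1}, while for the distances measured from $x_\alpha$ the natural subgradient is $\xi_\alpha = -K^*(Kx_\alpha-y^\dagger)/\alpha$, which the first proposition has already shown to lie in $\partial R(x_\alpha)$. These are the ``obvious choices'' referred to in the statement.

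First I would invoke Lemma~\ref{lem:triangle_ineq_bregman} with $x' = x_\alpha^\delta$, $x = x_\alpha$, $\xi = K^*w$ and $\zeta = \xi_\alpha$. This yields
\[
\bdistance[\xi]{x_\alpha^\delta}{x^\dagger} = \bdistance[\xi_\alpha]{x_\alpha^\delta}{x_\alpha} + \bdistance[\xi]{x_\alpha}{x^\dagger} + \scp{\xi-\xi_\alpha}{x_\alpha - x_\alpha^\delta},
\]
so that the quantity whose absolute value must be bounded is exactly the cross term $\scp{\xi - \xi_\alpha}{x_\alpha - x_\alpha^\delta}$. The whole problem thus reduces to estimating this single inner product.

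Next I would exploit the structure of both subgradients. Since $\xi - \xi_\alpha = K^*\bigl(w + (Kx_\alpha - y^\dagger)/\alpha\bigr)$, moving $K^*$ across the duality pairing gives
\[
\scp{\xi-\xi_\alpha}{x_\alpha-x_\alpha^\delta} = \scp{w + (Kx_\alpha-y^\dagger)/\alpha}{K(x_\alpha - x_\alpha^\delta)}.
\]
Applying the Cauchy-Schwarz inequality in $Y$ and then the triangle inequality bounds the modulus of this by $\bigl(\norm{w} + \norm{Kx_\alpha - y^\dagger}/\alpha\bigr)\,\norm{K(x_\alpha^\delta - x_\alpha)}$. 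The two discrepancy estimates from the first proposition then finish the job: estimate~\eqref{eq:approxerror_disrc} gives $\norm{Kx_\alpha - y^\dagger}/\alpha \leq 2\norm{w}$, so the first factor is at most $3\norm{w}$, and estimate~\eqref{eq:dataerror_disrc} gives $\norm{K(x_\alpha^\delta - x_\alpha)} \leq 2\delta$ for the second factor. Multiplying yields the claimed bound $6\norm{w}\delta$.

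There is no serious obstacle here; once the lemma is applied the argument is essentially bookkeeping. The only point requiring a little care is the identification of the correct subgradients, so that the cross term produced by the lemma becomes precisely the inner product $\scp{\xi-\xi_\alpha}{\cdot}$ that both discrepancy estimates are tailored to control, together with checking that the constants combine as $3 \times 2 = 6$ rather than something larger.
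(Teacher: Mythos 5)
Your proof is correct and follows exactly the same route as the paper: both apply Lemma~\ref{lem:triangle_ineq_bregman} with $x=x_\alpha$, $x'=x_\alpha^\delta$, $\xi=K^*w$, $\zeta=\xi_\alpha=-K^*(Kx_\alpha-y^\dagger)/\alpha$, identify the cross term as $\scp{w+(Kx_\alpha-y^\dagger)/\alpha}{K(x_\alpha-x_\alpha^\delta)}$, and bound it by $3\norm{w}\cdot 2\delta$ using \eqref{eq:approxerror_disrc} and \eqref{eq:dataerror_disrc}. No discrepancies.
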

\begin{proof}
  Taking $x=x_\alpha$, $x' = x_\alpha^\delta$, $\xi = K^* w$ and
  $\xi_\alpha = -K^*(Kx_\alpha-y^\dagger)/\alpha$ in
  Lemma~\ref{lem:triangle_ineq_bregman} gives
  \[
  \bdistance{x_\alpha^\delta}{x^\dagger} =
  \bdistance{x_\alpha^\delta}{x_\alpha} +
  \bdistance{x_\alpha}{x^\dagger} + \scp{w + (Kx_\alpha -
    y^\dagger)/\alpha}{K(x_\alpha - x_\alpha^\delta)},
  \]
  which together with inequalities~\eqref{eq:approxerror_disrc} and~\eqref{eq:dataerror_disrc}
  gives
  \begin{align*}
    \abs{\scp{w + (Kx_\alpha - y^\dagger)/\alpha}{K(x_\alpha -
        x_\alpha^\delta)}}& \leq (\norm{w} +
    \norm{Kx_\alpha-y^\dagger}/\alpha)\norm{K(x_\alpha^\delta-x_\alpha)}\\
    & \leq 6\norm{w}\delta.
  \end{align*}
  This concludes the proof.
\end{proof}
Hence, the total error differs from the sum of approximation and
data errors only by a term of magnitude $\delta$. In general, the
difference can be either positive or negative and both cases are
observed in numerical experiments.

We shall need the following result on the function
$\alpha\mapsto\|Kx_\alpha^\delta-y^\delta\|$.
\begin{lemma}
  \label{lem:continuity_discrepancy}
  The function $\alpha\mapsto \|Kx_\alpha^\delta-y^\delta\|$ is
  monotonically increasing and uniformly bounded. Moreover, if
  $\mathcal{J}_\alpha$ has a unique minimizer, then it is also
  continuous at $\alpha$.
\end{lemma}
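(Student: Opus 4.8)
The plan is to prove the three assertions in order of increasing difficulty: uniform boundedness, monotonicity, and continuity.

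First I would establish the uniform bound, which is the easiest part. For any $\alpha > 0$, the minimizing property of $x_\alpha^\delta$ gives $\tfrac12\norm{Kx_\alpha^\delta - y^\delta}^2 + \alpha R(x_\alpha^\delta) \leq \tfrac12\norm{Kx^\dagger - y^\delta}^2 + \alpha R(x^\dagger)$ by comparing against the feasible point $x^\dagger$. Dropping the nonnegative term $\alpha R(x_\alpha^\delta)$ and using $Kx^\dagger = y^\dagger$ together with $\norm{y^\dagger - y^\delta} \leq \delta$ yields $\norm{Kx_\alpha^\delta - y^\delta} \leq \norm{y^\dagger - y^\delta} \leq \delta$, so the discrepancy is bounded by $\delta$ uniformly in $\alpha$.

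Next I would prove monotonicity, which is the classical argument for the discrepancy as a function of the regularization parameter. Take $0 < \alpha_1 < \alpha_2$ and write the minimizing inequalities for $x_{\alpha_1}^\delta$ and $x_{\alpha_2}^\delta$ against each other: comparing $\mathcal{J}_{\alpha_1}(x_{\alpha_1}^\delta) \leq \mathcal{J}_{\alpha_1}(x_{\alpha_2}^\delta)$ and $\mathcal{J}_{\alpha_2}(x_{\alpha_2}^\delta) \leq \mathcal{J}_{\alpha_2}(x_{\alpha_1}^\delta)$. Adding these two inequalities causes the data-fitting terms to cancel in a useful way and, after collecting the $R$-terms with their coefficients $\alpha_1,\alpha_2$, leaves an inequality of the form $(\alpha_2 - \alpha_1)\bigl(R(x_{\alpha_1}^\delta) - R(x_{\alpha_2}^\delta)\bigr) \geq 0$, forcing $R(x_{\alpha_1}^\delta) \geq R(x_{\alpha_2}^\delta)$ since $\alpha_2 > \alpha_1$. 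Feeding this back into either single minimizing inequality then shows $\norm{Kx_{\alpha_1}^\delta - y^\delta} \leq \norm{Kx_{\alpha_2}^\delta - y^\delta}$, i.e.\ the discrepancy increases with $\alpha$. The mild care needed is that $x_\alpha^\delta$ need not be unique, but monotonicity holds for every choice of minimizers because the cross-comparison argument uses only the minimizing property, not uniqueness.

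The hard part will be continuity, and this is where the uniqueness hypothesis enters. The natural route is a stability argument: fix $\alpha$ and take a sequence $\alpha_n \to \alpha$; using the uniform bound on the discrepancy and the coercivity of $R$, the corresponding minimizers $x_{\alpha_n}^\delta$ are bounded in $R$-value, hence (by coercivity and weak lower semicontinuity) admit a weakly convergent subsequence. A standard $\Gamma$-convergence / lower-semicontinuity argument then shows any weak limit is a minimizer of $\mathcal{J}_\alpha$, and I would pass to the limit in the objective values using weak lower semicontinuity of both $\norm{K\cdot - y^\delta}^2$ and $R$ together with convergence of $\mathcal{J}_{\alpha_n}(x_{\alpha_n}^\delta)$. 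Here uniqueness of the minimizer of $\mathcal{J}_\alpha$ is exactly what upgrades subsequential convergence to convergence of the whole sequence: every subsequence has a sub-subsequence converging weakly to the unique minimizer $x_\alpha^\delta$, so $x_{\alpha_n}^\delta \rightharpoonup x_\alpha^\delta$. The remaining subtlety is that weak convergence alone need not give convergence of $\norm{Kx_{\alpha_n}^\delta - y^\delta}$; I expect to recover this by showing convergence of the full objective $\mathcal{J}_{\alpha_n}(x_{\alpha_n}^\delta) \to \mathcal{J}_\alpha(x_\alpha^\delta)$ from above (comparison) and below (lower semicontinuity), which forces convergence of each term separately, in particular of the discrepancy, yielding continuity at $\alpha$.
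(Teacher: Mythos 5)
Your monotonicity argument (the cross-comparison of the two minimizing inequalities) and your continuity argument (weak subsequential compactness, identification of every weak limit as the unique minimizer, and convergence of the objective values forcing convergence of each term separately) are both sound; in fact the paper itself only proves the boundedness assertion and cites the literature for the other two, so on those points you supply more detail than the authors do.

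However, your proof of the uniform bound --- the one part the paper actually proves --- contains a genuine error. Comparing against $x^\dagger$ gives
\[
\tfrac12\norm{Kx_\alpha^\delta-y^\delta}^2 \leq \tfrac12\norm{Kx^\dagger-y^\delta}^2+\alpha R(x^\dagger)-\alpha R(x_\alpha^\delta),
\]
and while you may drop $-\alpha R(x_\alpha^\delta)$, the term $\alpha R(x^\dagger)$ remains on the right: it is nonnegative and grows linearly in $\alpha$, so what you actually obtain is $\tfrac12\delta^2+\alpha R(x^\dagger)$, not $\tfrac12\delta^2$. The claimed inequality $\norm{Kx_\alpha^\delta-y^\delta}\leq\delta$ is false in general: by your own monotonicity result the discrepancy increases with $\alpha$ and tends to $\norm{K\tilde x-y^\delta}$ for a global $R$-minimizer $\tilde x$ (e.g.\ to $\norm{y^\delta}$ when $R$ vanishes only at $0$), which is typically far larger than $\delta$. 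The fix is the paper's choice of comparison element: take $\tilde x$ to be a minimizer of $R$ over all of $X$, so that $R(x_\alpha^\delta)\geq R(\tilde x)$ and the penalty terms can be cancelled, yielding the $\alpha$-independent bound $\norm{Kx_\alpha^\delta-y^\delta}\leq\norm{K\tilde x-y^\delta}$. This uniformity matters downstream: the paper uses the bound precisely in the regime $\alpha\to\infty$ (to conclude $\phi(\alpha)\to 0$ in Section~\ref{sec:motivation} and in the proof of Lemma~\ref{lem:asymptotics_mu}), where your $\alpha$-dependent estimate would not suffice.
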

\begin{proof}
  Let $\tilde{x}$ be an $R$-minimizing element in $X$. By the
  minimizing property of $x_\alpha^\delta$, we have
  \begin{equation*}
    \tfrac{1}{2}\|Kx_\alpha^\delta-y^\delta\|^2+\alpha
    R(x_\alpha^\delta) \leq \tfrac{1}{2}\|K\tilde{x}-y^\delta\|^2+\alpha
    R(\tilde{x}),
  \end{equation*}
  and thus
  $0\leq\|Kx_\alpha^\delta-y^\delta\|\leq\|K\tilde{x}-y^\delta\|<+\infty$,
  and is uniformly bounded. The proof of the remaining assertion can
  be found in \cite{Bonesky2009,jin2009c}.
\end{proof}

The last result in this section gives an estimate for the distance
between two regularized solutions for the same data but different
regularization parameters. This estimate underlies the quasi-optimality
principle in Section~\ref{sec:quasi-optim-princ}.
\begin{proposition}
  \label{prop:estimate_two_regularizations}
  For $q\in]0,1[$ and $\xi_\alpha^\delta = -K^*(Kx_\alpha^\delta-y^\delta)/\alpha$ there holds
  \begin{equation}
    \label{eq:two_reg_error}
    \bdistance[\xi_\alpha^\delta]{x_{q\alpha}^\delta}{x_\alpha^\delta} \leq
    \frac{(1-q)^2\norm{Kx_\alpha^\delta-y^\delta}^2}{2\alpha q}.
  \end{equation}
  Moreover, if the source condition \eqref{eq:source_condition1} is fulfilled, then
  \begin{equation}
    \label{eq:two_reg_error_disrc}
    \norm{K(x_{q\alpha}^\delta - x_\alpha^\delta)} \leq 2(1-q)(\delta +
    2\alpha\norm{w}).
  \end{equation}
\end{proposition}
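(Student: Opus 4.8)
The plan is to follow the template of the preceding estimates in this section: exploit a minimizing inequality, substitute the optimality-derived subgradient, and then close the argument by completing the square and applying Young's inequality. First I would record that the optimality condition for $x_\alpha^\delta$ gives $\xi_\alpha^\delta = -K^*(Kx_\alpha^\delta - y^\delta)/\alpha \in \partial R(x_\alpha^\delta)$, so that the Bregman distance $\bdistance[\xi_\alpha^\delta]{x_{q\alpha}^\delta}{x_\alpha^\delta}$ is meaningful. Then I would use the minimizing property of $x_{q\alpha}^\delta$ for the functional with parameter $q\alpha$, tested against $x_\alpha^\delta$, to obtain
\[
\tfrac12\norm{Kx_{q\alpha}^\delta - y^\delta}^2 + q\alpha R(x_{q\alpha}^\delta) \leq \tfrac12\norm{Kx_\alpha^\delta - y^\delta}^2 + q\alpha R(x_\alpha^\delta).
\]

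Next I would rewrite $R(x_{q\alpha}^\delta) - R(x_\alpha^\delta)$ via the definition of the Bregman distance as $\bdistance[\xi_\alpha^\delta]{x_{q\alpha}^\delta}{x_\alpha^\delta} + \scp{\xi_\alpha^\delta}{x_{q\alpha}^\delta - x_\alpha^\delta}$ and insert the explicit form of $\xi_\alpha^\delta$, turning the inner product into $-\tfrac1\alpha\scp{Kx_\alpha^\delta-y^\delta}{K(x_{q\alpha}^\delta-x_\alpha^\delta)}$. Writing $r_\alpha = Kx_\alpha^\delta - y^\delta$ and $s = K(x_{q\alpha}^\delta - x_\alpha^\delta)$, and using the expansion $\tfrac12\norm{r_\alpha + s}^2 - \tfrac12\norm{r_\alpha}^2 = \tfrac12\norm{s}^2 + \scp{s}{r_\alpha}$, the two inner-product contributions combine, and after dividing through I expect to reach the single clean inequality
\[
\tfrac12\norm{s}^2 + (1-q)\scp{s}{r_\alpha} + q\alpha\,\bdistance[\xi_\alpha^\delta]{x_{q\alpha}^\delta}{x_\alpha^\delta} \leq 0.
\]

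From here both assertions fall out directly. For the Bregman estimate~\eqref{eq:two_reg_error} I would bound $-(1-q)\scp{s}{r_\alpha} \le (1-q)\norm{s}\norm{r_\alpha}$ by Cauchy--Schwarz and then invoke Young's inequality in the form $(1-q)\norm{s}\norm{r_\alpha} \le \tfrac12\norm{s}^2 + \tfrac{(1-q)^2}{2}\norm{r_\alpha}^2$; the $\tfrac12\norm{s}^2$ contribution cancels against the one already present, leaving $q\alpha\,\bdistance[\xi_\alpha^\delta]{x_{q\alpha}^\delta}{x_\alpha^\delta} \le \tfrac{(1-q)^2}{2}\norm{r_\alpha}^2$, and dividing by $q\alpha$ yields the claim. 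For the discrepancy estimate~\eqref{eq:two_reg_error_disrc} I would instead drop the nonnegative Bregman term, which leaves $\tfrac12\norm{s}^2 \le (1-q)\norm{s}\norm{r_\alpha}$ and hence $\norm{s} \le 2(1-q)\norm{r_\alpha}$; substituting the discrepancy bound $\norm{Kx_\alpha^\delta - y^\delta} \le \delta + 2\alpha\norm{w}$ from Proposition~\ref{prop:estimate_total_error}, equation~\eqref{eq_totalerror_disrc}, completes the argument.

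The main obstacle is purely the bookkeeping in the second step: I must ensure that after substituting $\xi_\alpha^\delta$ the coefficient of $\scp{s}{r_\alpha}$ collapses correctly to $(1-q)$, since one contribution comes from expanding the quadratic residual difference and another (carrying the factor $q$) from the subgradient term. Once that single inequality is secured, both estimates are immediate consequences of the nonnegativity of the Bregman distance and elementary inequalities.
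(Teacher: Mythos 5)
Your proposal is correct and follows essentially the same route as the paper: testing the minimizing property of $x_{q\alpha}^\delta$ against $x_\alpha^\delta$, inserting the optimality subgradient $\xi_\alpha^\delta$, collapsing the terms to $\tfrac12\norm{s}^2+(1-q)\scp{s}{r_\alpha}+q\alpha\,\bdistance[\xi_\alpha^\delta]{x_{q\alpha}^\delta}{x_\alpha^\delta}\leq 0$, and then applying Cauchy--Schwarz with Young's inequality for \eqref{eq:two_reg_error} and dropping the Bregman term plus invoking \eqref{eq_totalerror_disrc} for \eqref{eq:two_reg_error_disrc}. The bookkeeping you flag as the main obstacle does work out exactly as you anticipate.
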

\begin{proof}
  The minimizing property of $x_{q\alpha}^\delta$ implies
  \[
  \tfrac12\norm{Kx_{q\alpha}^\delta - y^\delta}^2 + q\alpha
  R(x_{q\alpha}^\delta) \leq \tfrac12\norm{Kx_{\alpha}^\delta -
    y^\delta}^2 + q\alpha R(x_{\alpha}^\delta).
  \]
  Rearranging the terms gives
  \[
  \tfrac12\norm{Kx_{q\alpha}^\delta - y^\delta}^2 +
  q\alpha\bdistance[\xi_\alpha^\delta]{x_{q\alpha}^\delta}{x_\alpha^\delta}
  \leq \tfrac12\norm{Kx_\alpha^\delta - y^\delta}^2 +
  q\scp{Kx_\alpha^\delta-y^\delta}{K(x_{q\alpha}^\delta -
    x_\alpha^\delta)},
  \]
  which leads to
  \[
  q\alpha\bdistance[\xi_\alpha^\delta]{x_{q\alpha}^\delta}{x_\alpha^\delta} \leq
  -(1-q)\scp{Kx_\alpha^\delta - y^\delta}{K(x_{q\alpha}^\delta -
    x_\alpha^\delta)} - \tfrac12\norm{K(x_{q\alpha}^\delta -
    x_\alpha^\delta)}^2.
  \]
  Appealing again to Cauchy-Schwarz and Young's inequalities
  gives~\eqref{eq:two_reg_error}. Using Cauchy-Schwarz inequality
  in
  \[
  \tfrac12\norm{K(x_{q\alpha}^\delta - x_\alpha^\delta)}^2 \leq
  -(1-q)\scp{Kx_\alpha^\delta - y^\delta}{K(x_{q\alpha}^\delta -
    x_\alpha^\delta)},
  \]
  and noting estimate \eqref{eq_totalerror_disrc} shows the
  remaining assertion.
\end{proof}

\section{A parameter choice \'a la Hanke-Raus}
\label{sec:parameter-choice-a} In this section, we investigate a
first heuristic parameter choice rule based on error estimate, which
resembles a rule due to Hanke and Raus \cite{Hanke1996}.
Although it is known that heuristic rules can never lead to
regularization methods in the context of the classical worst-case
scenario unless the problem is well-posed~\cite{Bakushinskii1984},
they have proven applicable and useful in practice
\cite{hansen:1998}. Recent results \cite{Kindermann2008} show that
weak assumptions on the true data $y^\dagger$ as well as the noisy
data $y^\delta$, hence leaving the worst-case scenario analysis,
lead to provable error estimates. We shall establish a posteriori
error estimates as well as convergence for the rule.
\subsection{Motivation}
\label{sec:motivation}

We see from Proposition~\ref{prop:estimate_total_error} that the
estimate for the total error differs from that for the squared
residual by a factor of $1/\alpha$:
\begin{align*}
  \frac{\norm{Kx_\alpha^\delta - y^\delta}^2}{\alpha} & \leq
  \frac{(\delta + 2\alpha\norm{w})^2}{\alpha} =
  \Bigl(\frac{\delta}{\sqrt{\alpha}} + 2\sqrt{\alpha}\norm{w}\Bigr)^2\\
  & \approx \frac12\Bigl(\frac{\delta}{\sqrt{\alpha}} +
  \sqrt{\alpha}\norm{w}\Bigr)^2\geq
  \bdistance{x_\alpha^\delta}{x^\dagger}.
\end{align*}
Since the value $\norm{Kx_\alpha^\delta - y^\delta}^2/\alpha$ can be
evaluated a posteriori without resorting to any knowledge of the
exact noise level $\delta$, we propose to use it as an estimate of
the total error and to choose an appropriate regularization
parameter $\alpha$ by minimizing the function
\begin{equation}
  \label{eq:def_phi}
  \phi(\alpha) = \frac{\norm{Kx_\alpha^\delta-y^\delta}^2}{\alpha}.
\end{equation}
This resembles the parameter choice due to Hanke and Raus
\cite{Hanke1996} for classical Tikhonov regularization as well as
several iterative regularization methods.

In view of Lemma~\ref{lem:continuity_discrepancy} we see that
$\lim_{\alpha\rightarrow+\infty}\phi(\alpha)=0$. Similarly, in case
of a unique minimizer to the functional $\mathcal{J}_\alpha$ for any
$\alpha>0$, the optimization problem of minimizing $\phi$
over any bounded and closed interval of the positive semi-axis
$\RR_+$ is well-defined.

\subsection{A posteriori error estimates}
\label{sec:error-estim-conv}

In this part, we derive a posteriori error estimates for the
Hanke-Raus rule to offer partial
theoretical justification. We shall treat two cases of uniformly
convex $R$ and the particular case $R(x)=\|x\|_{\ell^1}$ separately.

\begin{theorem}
  \label{thm:estimate_hanke_raus}
  Let the source condition~\eqref{eq:source_condition1} be fulfilled.
  Let $\phi$ be defined by~\eqref{eq:def_phi} and $\alpha^*$
  defined as
  \begin{equation}
    \label{eq:def_HR}
    \alpha^* \in \argmin_{\alpha\in[0,\norm{K}^2]} \phi(\alpha).
  \end{equation}
  If furthermore $\delta^* := \norm{Kx_{\alpha^*}^\delta -
    y^\delta}\neq 0$ then there exists a constant $C>0$ such that
  \[
  \bdistance{x_{\alpha^*}^\delta}{x^\dagger} \leq C\left(1 +
  \bigl(\tfrac{\delta}{\delta^*}\bigr)^2\right) \max(\delta,\delta^*).
  \]
\end{theorem}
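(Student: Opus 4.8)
The plan is to bound the total error $\bdistance{x_{\alpha^*}^\delta}{x^\dagger}$ by the estimator $\phi(\alpha^*)=(\delta^*)^2/\alpha^*$ together with a correction of order $\delta$, and then exploit the minimality of $\alpha^*$ to compare $\phi(\alpha^*)$ against $\phi$ evaluated at a convenient comparison parameter. The key observation driving everything is the one made in the Motivation: the total error is controlled by the residual-based quantity $\phi$ up to a factor, so the strategy splits naturally into (i) an upper bound of the form $\bdistance{x_{\alpha^*}^\delta}{x^\dagger}\lesssim \phi(\alpha^*)+\delta$, and (ii) an upper bound on $\phi(\alpha^*)$ itself coming from the argmin property.

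**For step (i)**, I would start from Proposition~\ref{prop:estimate_total_error}, which gives $\bdistance{x_{\alpha^*}^\delta}{x^\dagger}\leq\frac12\bigl(\delta/\sqrt{\alpha^*}+\sqrt{\alpha^*}\norm{w}\bigr)^2$. Expanding the square produces three terms: $\delta^2/(2\alpha^*)$, the cross term $\delta\norm{w}$, and $\tfrac12\alpha^*\norm{w}^2$. The middle term is already $\bigO(\delta)$. The first and last terms must be re-expressed through $\phi(\alpha^*)=(\delta^*)^2/\alpha^*$. Using the residual estimate~\eqref{eq_totalerror_disrc}, $\delta^*\leq\delta+2\alpha^*\norm{w}$, and conversely a lower bound on $\delta^*$ should let me trade $\alpha^*$ and $\delta^2/\alpha^*$ against $\phi(\alpha^*)$ and $\delta$; the goal is an inequality of the shape
\[
\bdistance{x_{\alpha^*}^\delta}{x^\dagger}\leq C_1\,\phi(\alpha^*)+C_2\,\delta .
\]

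**For step (ii)**, minimality of $\alpha^*$ means $\phi(\alpha^*)\leq\phi(\bar\alpha)$ for every admissible $\bar\alpha\in[0,\norm{K}^2]$. The natural comparison choice is the \emph{a priori} optimal $\bar\alpha\sim\delta/\norm{w}$, which balances the two terms in the total-error estimate; plugging it into $\phi(\bar\alpha)=\norm{Kx_{\bar\alpha}^\delta-y^\delta}^2/\bar\alpha$ and bounding the numerator via~\eqref{eq_totalerror_disrc} yields $\phi(\bar\alpha)\leq(\delta+2\bar\alpha\norm{w})^2/\bar\alpha=\bigO(\delta)$. Hence $\phi(\alpha^*)=\bigO(\delta)$ as well. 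Combined with step (i), this already gives a bound of order $\delta$; the appearance of $\max(\delta,\delta^*)$ and the factor $\bigl(1+(\delta/\delta^*)^2\bigr)$ in the claimed estimate tells me the clean comparison $\bar\alpha\sim\delta/\norm{w}$ may fall outside the admissible range or force me to reintroduce $\delta^*$, so the final bookkeeping must track $\delta^*$ explicitly rather than discarding it.

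**The main obstacle I anticipate** is precisely controlling $\phi(\alpha^*)$ without assuming the comparison parameter is admissible and without knowing $\delta$. The issue is that $\delta$ is unknown (this being a heuristic rule), so I cannot simply substitute $\bar\alpha\sim\delta/\norm{w}$ as a realized choice; instead the argmin inequality must be used to infer information about $\phi(\alpha^*)$ purely structurally, and the result is stated in terms of the \emph{observable} $\delta^*=\norm{Kx_{\alpha^*}^\delta-y^\delta}$ rather than $\delta$. I expect the factor $1+(\delta/\delta^*)^2$ to arise from bounding $\alpha^*$ both above and below using $\phi(\alpha^*)=(\delta^*)^2/\alpha^*$: writing $\alpha^*=(\delta^*)^2/\phi(\alpha^*)$ and inserting this into the $\delta^2/(2\alpha^*)=\delta^2\phi(\alpha^*)/(2(\delta^*)^2)$ term produces exactly the ratio $(\delta/\delta^*)^2$. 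Managing the case distinction governed by $\max(\delta,\delta^*)$, and ensuring all constants collapse into a single $C$, will be the delicate part; the rest is Cauchy–Schwarz and Young bookkeeping that I would not grind through here.
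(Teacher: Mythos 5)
Your step (ii) is essentially the paper's argument: by minimality, $\phi(\alpha^*)\leq\phi(\bar\alpha)$ for an admissible comparison parameter of order $\delta$ (the paper takes $\bar\alpha=\min(1,\delta^{-1})\norm{K}^2\delta$, which is guaranteed to lie in $[0,\norm{K}^2]$), and \eqref{eq_totalerror_disrc} then gives $\phi(\alpha^*)\leq C\delta$; combined with $\delta^2/(2\alpha^*)=(\delta/\delta^*)^2\phi(\alpha^*)/2$ this produces exactly the factor $(\delta/\delta^*)^2\,\delta$ in the final bound. Your worry that $\bar\alpha\sim\delta/\norm{w}$ cannot be ``realized'' because $\delta$ is unknown is a non-issue: the comparison parameter lives only in the analysis, never in the algorithm.

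The genuine gap is in step (i). Expanding the a priori bound of Proposition~\ref{prop:estimate_total_error} leaves you with the term $\tfrac12\alpha^*\norm{w}^2$, and none of the tools you list can bound it by $C\max(\delta,\delta^*)$: the only a priori control on $\alpha^*$ is $\alpha^*\leq\norm{K}^2$, so this term is merely a constant; the identity $\alpha^*=(\delta^*)^2/\phi(\alpha^*)$ combined with $\phi(\alpha^*)\leq C\delta$ yields a \emph{lower} bound on $\alpha^*$, not an upper one; and $\delta^*\leq\delta+2\alpha^*\norm{w}$ likewise bounds $\alpha^*$ only from below. (Had your target inequality $\bdistance{x_{\alpha^*}^\delta}{x^\dagger}\leq C_1\phi(\alpha^*)+C_2\delta$ been attainable, it would give the strictly stronger conclusion $\bdistance{x_{\alpha^*}^\delta}{x^\dagger}\leq C\delta$, which is a warning sign.) The paper avoids this trap by not passing through the a priori total-error bound at all: it splits the error via Corollary~\ref{cor:splitting_approx_data_error} into $\bdistance{x_{\alpha^*}^\delta}{x_{\alpha^*}}+\bdistance{x_{\alpha^*}}{x^\dagger}+6\norm{w}\delta$ and estimates the approximation part by the sharper, residual-based inequality $\bdistance{x_{\alpha^*}}{x^\dagger}\leq\norm{w}\,\norm{Kx_{\alpha^*}-y^\dagger}$ (a byproduct of \eqref{eq:aux_est_approx_error}); the triangle inequality together with \eqref{eq:dataerror_disrc} then gives $\norm{Kx_{\alpha^*}-y^\dagger}\leq 2\delta+\delta^*+\delta\leq 4\max(\delta,\delta^*)$. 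This is precisely the step that converts the uncontrollable $\alpha^*\norm{w}^2$ into the observable $\max(\delta,\delta^*)$, and it is missing from your plan; the data-error half of your argument is handled exactly as you describe.
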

\begin{proof}
  We have from Corollary~\ref{cor:splitting_approx_data_error}
  \[
  \bdistance{x_\alpha^\delta}{x^\dagger} \leq
  \bdistance{x_\alpha}{x^\dagger} +
  \bdistance{x_\alpha^\delta}{x_\alpha} + 6\norm{w}\delta.
  \]
  It suffices to estimate the two Bregman distance terms.
  First we estimate the approximation error
  $\bdistance{x_{\alpha^\ast}}{x^\dagger}$ for $\alpha=\alpha^*$. By
  inequalities~\eqref{eq:two_reg_error_disrc}
  and~\eqref{eq_totalerror_disrc}, we obtain
  \begin{align*}
    \bdistance{x_{\alpha^*}}{x^\dagger} & \leq
    \norm{w}\norm{Kx_{\alpha^*}-y^\dagger} \\
    &\leq \norm{w}\left(\norm{K(x_{\alpha^*}
      - x_{\alpha^*}^\delta)} + \norm{Kx_{\alpha^*}^\delta - y^\delta} + \delta\right)\\
    & \leq \norm{w}(2\delta + \delta^*+\delta) \leq
    4\norm{w}\max(\delta,\delta^*).
  \end{align*}
  Next we estimate the data error
  $\bdistance{x_{\alpha^\ast}^\delta}{x_{\alpha^\ast}}$. Using
  inequality~\eqref{eq:dataerror}, we get
  \begin{equation}
    \label{eq:aux_est_hanke_raus}
    \bdistance{x_{\alpha^*}^\delta}{x_{\alpha^*}} \leq \frac{\delta^2}{2\alpha^*}
    = \Bigl(\frac{\delta}{\delta^*}\Bigr)^2\, \frac{\norm{Kx_{\alpha^*}^\delta - y^\delta}^2}{2\alpha^*}.
  \end{equation}
  By the definition of $\alpha^*$, we only increase the right hand side
  if we replace $\alpha^*$ by any other $\bar \alpha\in
  [0,\norm{K}^2]$. We use $\bar\alpha = \bar c \delta$ with $\bar
  c=\min(1,\delta^{-1})\norm{K}^2$ and deduce
  from inequality~\eqref{eq_totalerror_disrc} that
  \[
  \norm{Kx_{\bar \alpha}^\delta-y^\delta}\leq (1+2\bar
  c\norm{w})\delta.
  \]
  Replacing $\alpha^*$ by $\bar \alpha$
  in inequality~\eqref{eq:aux_est_hanke_raus}, we have
  \[
  \bdistance{x_{\alpha^*}^\delta}{x_{\alpha^*}} \leq
  \Bigl(\frac{\delta}{\delta^*}\Bigr)^2\,\frac{\norm{Kx_{\bar\alpha}^\delta
      - y^\delta}^2}{2\bar\alpha} \leq
  \Bigl(\frac{\delta}{\delta^*}\Bigr)^2\, \frac{(1+2\bar
    c\norm{w})^2\delta}{2\bar c}.
  \]
  By combining the above two estimates, we finally arrive at
  \begin{align*}
    \bdistance{x_{\alpha^*}^\delta}{x^\dagger}& \leq
    4\norm{w}\max(\delta,\delta^*) +
    \Bigl(\frac{\delta}{\delta^*}\Bigr)^2\frac{(1+2\bar
      c\norm{w})^2}{2\bar c}
    \delta + 6\norm{w}\delta\\
  & \leq C(1 + \bigl(\tfrac{\delta}{\delta^*}\bigr)^2) \max(\delta,\delta^*)
  \end{align*}
  with $C = \max(10\norm{w}, (1+2\bar c\norm{w})^2/(2\bar c))$ as
  desired.
\end{proof}

The preceding result estimates the error in terms of the Bregman
distance. In the case of $p$-convex regularization terms $R$ (see
e.g., \cite{schuster2008tikbanach}), this also provides error
estimates in norm, i.e., $\norm{x_{\alpha^*}^\delta-x^\dagger}$.
However, the interesting case of $\ell^1$ regularization, i.e.,
$X=\ell^2$ and~$R(x) = \norm[\ell^1]{x} = \sum_{k}\abs{x_k}$
is not covered. In this
case the Bregman distance is not even positive definite,
i.e.,~$\bdistance{x'}{x}$ may vanish for distinct $x'$ and $x$.
However, by using techniques
from~\cite{lorenz2008reglp,grasmair2008sparseregularization}, we are
still able to prove an analogous error estimate for this case. To
this end, we recall the following result
\cite{grasmair2008sparseregularization}.

\begin{lemma}\label{lem:fbisparse}
Let $X=\ell^2$ and $R(x) = \sum_{k}\abs{x_k}$.
Assume that the solution $x^\dagger$ is finitely supported and satisfies the
source condition (\ref{eq:source_condition1}). Moreover, assume that the operator
$K$ satisfies the finite basis injectivity property, that is, for
any finitely support $u$ and $v$, there holds that $Ku=Kv$ implies
$u=v$. Then there exist two positive constants $c_1$ and $c_2$ such
that
\begin{equation*}
\norm[\ell^1]{x-x^\dagger}\leq c_1[R(x)-R(x^\dagger)]+c_2\norm{K(x-x^\dagger)}.
\end{equation*}
\end{lemma}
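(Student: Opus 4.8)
My plan is to reduce the claimed inequality to a bound of the form $\norm[\ell^1]{h}\le C\bigl(\bdistance[\xi]{x}{x^\dagger}+\norm{Kh}\bigr)$ with $h:=x-x^\dagger$, and then to prove the latter by splitting $h$ into a finite-dimensional part and a tail. Writing $\xi=K^*w\in\partial R(x^\dagger)$ from the source condition~\eqref{eq:source_condition1}, for $R=\norm[\ell^1]{\cdot}$ this means $\xi_k=\sign(x_k^\dagger)$ on the support $I$ of $x^\dagger$ and $\abs{\xi_k}\le1$ for $k\notin I$. Since $\scp{\xi}{h}=\scp{w}{Kh}$, the definition of the Bregman distance gives $\bdistance[\xi]{x}{x^\dagger}=[R(x)-R(x^\dagger)]-\scp{w}{Kh}\le[R(x)-R(x^\dagger)]+\norm{w}\norm{Kh}$ by Cauchy--Schwarz; hence the reduced bound immediately yields the assertion with $c_1=C$ and $c_2=C(1+\norm{w})$.

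The crucial, and I expect most delicate, step is to control the tail of $h$ by $\bdistance[\xi]{x}{x^\dagger}$. Because $\xi=K^*w$ belongs to $\ell^2=X^*$, its entries decay to zero, so the saturation set $J:=\set{k}{\abs{\xi_k}=1}$ is finite and contains $I$, and moreover $\rho:=\sup\set{\abs{\xi_k}}{\abs{\xi_k}<1}<1$. Evaluating the $\ell^1$ Bregman distance entrywise, the terms with $k\in I$ equal $\abs{x_k}-\sign(x_k^\dagger)x_k\ge0$ and may be discarded, leaving $\bdistance[\xi]{x}{x^\dagger}\ge\sum_{k\notin I}(1-\abs{\xi_k})\abs{x_k}\ge(1-\rho)\norm[\ell^1]{h_{J^c}}$, where $h_{J^c}$ denotes the restriction of $h$ to the complement of $J$ and I used that $x^\dagger$ vanishes off $I\subseteq J$. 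This yields the tail estimate $\norm[\ell^1]{h_{J^c}}\le(1-\rho)^{-1}\bdistance[\xi]{x}{x^\dagger}$.

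Next I would bound the finite-dimensional part $h_J$, which lies in $V:=\set{v\in\ell^2}{v \text{ supported in } J}$. Every $v\in V$ is finitely supported, so the finite basis injectivity property makes $K|_V$ injective; being an injective linear map on the finite-dimensional space $V$, it is bounded below, i.e.\ $\norm{Kv}\ge c\norm[\ell^1]{v}$ for some $c>0$ and all $v\in V$. Applying this to $h_J=h-h_{J^c}$ and using $\norm{Kh_{J^c}}\le\norm{K}\norm{h_{J^c}}\le\norm{K}\norm[\ell^1]{h_{J^c}}$ (the last step since $\norm[\ell^2]{\cdot}\le\norm[\ell^1]{\cdot}$), I obtain $\norm[\ell^1]{h_J}\le c^{-1}\bigl(\norm{Kh}+\norm{K}\norm[\ell^1]{h_{J^c}}\bigr)$.

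Finally I would add the two contributions through $\norm[\ell^1]{h}=\norm[\ell^1]{h_J}+\norm[\ell^1]{h_{J^c}}$ and insert the tail estimate to reach $\norm[\ell^1]{h}\le c^{-1}\norm{Kh}+(1+\norm{K}/c)(1-\rho)^{-1}\bdistance[\xi]{x}{x^\dagger}$, which is the reduced bound; substituting the upper bound for $\bdistance[\xi]{x}{x^\dagger}$ from the first paragraph then produces explicit admissible constants $c_1$ and $c_2$. The only genuinely nontrivial ingredients are the finiteness of $J$ together with the strict gap $\rho<1$ --- both consequences of the decay of the source element $\xi=K^*w\in\ell^2$ --- and the bounded-below property furnished by finite basis injectivity on the finite-dimensional subspace $V$; the remaining manipulations are the entrywise Bregman computation and routine norm inequalities.
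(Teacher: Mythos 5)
The paper does not prove this lemma at all: it is quoted verbatim from the reference \cite{grasmair2008sparseregularization}, so there is no in-paper proof to compare against. Your argument is, as far as I can check, a correct and self-contained reconstruction of the proof in that reference, and it follows the same standard route: split $h=x-x^\dagger$ into the part supported on the finite saturation set $J=\set{k}{\abs{\xi_k}=1}\supseteq\mathrm{supp}(x^\dagger)$ and the tail, control the tail by the Bregman distance using the strict gap $\rho<1$ (which you correctly justify from $\xi=K^*w\in\ell^2$, hence $\xi_k\to0$), and control the finite-dimensional part by the bounded-below property of $K$ on the span of $\sett{e_k}_{k\in J}$ furnished by the finite basis injectivity property. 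The entrywise Bregman computation, the reduction $\bdistance[\xi]{x}{x^\dagger}\le[R(x)-R(x^\dagger)]+\norm{w}\norm{Kh}$ via $\scp{\xi}{h}=\scp{w}{Kh}$, and the final bookkeeping of constants are all sound. Two cosmetic points you could make explicit: the inequality is trivial when $R(x)=+\infty$, so one may assume $x\in\ell^1$ and the entrywise rearrangement of the Bregman distance converges absolutely; and the terms of the sum you discard for $k\in I$ are indeed each nonnegative, which you state but is worth one line of verification.
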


We are now ready to transfer Theorem~\ref{thm:estimate_hanke_raus} to the case
$R(x)=\norm[\ell^1]{x}$.
\begin{theorem}\label{thm:hankrausestimate2}
  Assume that the conditions in Lemma \ref{lem:fbisparse} are
  satisfied. Let $\alpha^*$ be chose according to~\eqref{eq:aux_est_hanke_raus}.
  If furthermore $\delta^* := \norm{Kx_{\alpha^*}^\delta -
    y^\delta}\neq 0$ then there exists a constant $C>0$ such that
  \[
  \norm[\ell^1]{x^\dagger-x_{\alpha^*}^\delta} \leq C (1 +
  (\tfrac{\delta}{\delta^*})^2){\max(\delta^*,\delta)}.
  \]
\end{theorem}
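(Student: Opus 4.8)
The plan is to mirror the structure of the proof of Theorem~\ref{thm:estimate_hanke_raus}, replacing the Bregman-distance bound on the total error by the $\ell^1$-norm bound supplied by Lemma~\ref{lem:fbisparse}. Applying that lemma with $x = x_{\alpha^*}^\delta$ gives
\[
\norm[\ell^1]{x_{\alpha^*}^\delta-x^\dagger} \leq c_1\bigl[R(x_{\alpha^*}^\delta)-R(x^\dagger)\bigr] + c_2\norm{K(x_{\alpha^*}^\delta-x^\dagger)},
\]
so the task reduces to controlling the two terms on the right by $C(1+(\delta/\delta^*)^2)\max(\delta^*,\delta)$. The second term is the easier one: using the triangle inequality $\norm{K(x_{\alpha^*}^\delta-x^\dagger)} \leq \norm{Kx_{\alpha^*}^\delta-y^\delta} + \norm{y^\delta-y^\dagger} \leq \delta^*+\delta$, which is already of the desired order $\max(\delta^*,\delta)$.

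The main work is to bound the functional gap $R(x_{\alpha^*}^\delta)-R(x^\dagger)$. First I would rewrite it in terms of Bregman distances via the subgradient $\xi = K^*w$ from the source condition:
\[
R(x_{\alpha^*}^\delta)-R(x^\dagger) = \bdistance[\xi]{x_{\alpha^*}^\delta}{x^\dagger} + \scp{\xi}{x_{\alpha^*}^\delta-x^\dagger} = \bdistance[\xi]{x_{\alpha^*}^\delta}{x^\dagger} + \scp{w}{K(x_{\alpha^*}^\delta-x^\dagger)}.
\]
The inner-product term is bounded by $\norm{w}(\delta^*+\delta)$ using the same residual estimate as above, which is of order $\max(\delta^*,\delta)$. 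The Bregman-distance term $\bdistance[\xi]{x_{\alpha^*}^\delta}{x^\dagger}$ is exactly the quantity that Theorem~\ref{thm:estimate_hanke_raus} bounds, since the argument there never uses positive definiteness of the Bregman distance---it only uses the splitting from Corollary~\ref{cor:splitting_approx_data_error}, the approximation-error estimate~\eqref{eq:approxerror_disrc}, the data-error estimate~\eqref{eq:dataerror}, and the comparison value $\bar\alpha = \bar c\,\delta$. Thus $\bdistance[\xi]{x_{\alpha^*}^\delta}{x^\dagger} \leq C'(1+(\delta/\delta^*)^2)\max(\delta,\delta^*)$ carries over verbatim.

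Collecting the three contributions, each is bounded by a constant multiple of $(1+(\delta/\delta^*)^2)\max(\delta^*,\delta)$, and choosing $C$ to absorb $c_1$, $c_2$, $\norm{w}$, and the constant from Theorem~\ref{thm:estimate_hanke_raus} yields the claim. The step I expect to be the main obstacle is confirming that the chain of estimates in Theorem~\ref{thm:estimate_hanke_raus} for $\bdistance[\xi]{x_{\alpha^*}^\delta}{x^\dagger}$ genuinely survives in the $\ell^1$ setting; the crucial observation is that those estimates rely only on \emph{non-negativity} of the Bregman distance (used in the proof of the Proposition) and on the optimality/minimality inequalities, none of which require positive definiteness. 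Once this is checked, the remaining algebra is routine.
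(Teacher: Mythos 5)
Your proposal is correct and follows essentially the same route as the paper: the paper likewise applies Lemma~\ref{lem:fbisparse}, rewrites $R(x_{\alpha^*}^\delta)-R(x^\dagger)$ as $\bdistance{x_{\alpha^*}^\delta}{x^\dagger}+\scp{w}{K(x_{\alpha^*}^\delta-x^\dagger)}$, bounds $\norm{K(x_{\alpha^*}^\delta-x^\dagger)}\leq\delta^*+\delta$, and controls the Bregman-distance term by repeating the approximation/data-error estimates of Theorem~\ref{thm:estimate_hanke_raus} (which, as you correctly note, require only the source condition and non-negativity of the Bregman distance, not positive definiteness). The only cosmetic difference is that you cite that theorem as a black box where the paper re-derives its three estimates inline.
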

\begin{proof}
  By Lemma \ref{lem:fbisparse}, the definition of Bregman distance
  $\bdistance{x}{x^\dagger}$ and the source condition
  \eqref{eq:source_condition1}, we have
  \begin{equation*}
    \begin{aligned}
      \norm[\ell^1]{x-x^\dagger}&\leq c_1[R(x)-R(x^\dagger)]+c_2\|K(x-x^\dagger)\|\\
      &=c_1\bdistance{x}{x^\dagger}+c_1\langle\xi,x-x^\dagger\rangle+c_2\|K(x-x^\dagger)\|\\
      &= c_1\bdistance{x}{x^\dagger}+c_1\langle
      w,K(x-x^\dagger)\rangle+c_2\|K(x-x^\dagger)\|\\
      &\leq{}c_1\bdistance{x}{x^\dagger}+(c_1\|w\|+c_2)\|K(x-x^\dagger)\|
    \end{aligned}
  \end{equation*}
  by Cauchy-Schwarz inequality. Now by virtue of
  Corollary~\ref{cor:splitting_approx_data_error} we have
  \begin{equation*}
    \norm[\ell^1]{x_{\alpha^\ast}^\delta-x^\dagger}\leq
    c_1(\bdistance{x_{\alpha^\ast}^\delta}{x_{\alpha^\ast}} +
    \bdistance{x_{\alpha^\ast}}{x^\dagger} + 6\|w\|\delta)
    +(c_1\|w\|+c_2)\|K(x_{\alpha^\ast}^\delta-x^\dagger)\|.
  \end{equation*}
  Next we bound each term on the right hand side. First observe
  \begin{align*}
    \|K(x_{\alpha^\ast}^\delta-x^\dagger)\|&\leq
    \|Kx_{\alpha^*}^\delta-y^\delta\|+\|y^\delta-Kx^\dagger\|\\
    &\leq\delta^\ast+\delta\leq 2\max(\delta,\delta^\ast).
  \end{align*}
  Then, for the approximation error
  $\bdistance{x_{\alpha^\ast}}{x^\dagger}$, we obtain as before
  \begin{align*}
    \bdistance{x_{\alpha^*}}{x^\dagger} & \leq
    \norm{w}\norm{Kx_{\alpha^*}-y}\\
    & \leq \norm{w}(\norm{K(x_{\alpha^*}
      - x_{\alpha^*}^\delta)} + \norm{Kx_{\alpha^*}^\delta - y^\delta}
    + \delta)\\
    & \leq \norm{w}(2\delta + \delta^*+\delta) \leq
    4\norm{w}\max(\delta,\delta^*).
  \end{align*}
  Finally, for the data error
  $\bdistance{x_{\alpha^\ast}^\delta}{x_{\alpha^\ast}}$, we obtain
  from inequality~\eqref{eq:dataerror} and the definition of $\alpha^\ast$
  \begin{equation}
    \bdistance{x_{\alpha^*}^\delta}{x_{\alpha^*}} \leq \frac{\delta^2}{2\alpha^*}
    = \Bigl(\frac{\delta}{\delta^*}\Bigr)^2\, \frac{\norm{Kx_{\alpha^*}^\delta - y^\delta}^2}{2\alpha^*}.
  \end{equation}
  By the minimizing property of $\alpha^\ast$, replacing $\alpha^*$ by
  any other $\bar\alpha\in [0,\norm{K}^2]$ only increases the right
  hand side. Setting $\bar\alpha=c_3\delta\in[0,\|K\|^2]$, then
  $\|Kx_{\bar\alpha}^\delta-y^\delta\|^2\leq c_4\delta$
  (cf.~\cite{grasmair2008sparseregularization}), which consequently gives
  \begin{equation*}
    \bdistance{x_{\alpha^*}^\delta}{x_{\alpha^*}}
    \leq \Bigl(\frac{\delta}{\delta^*}\Bigr)^2\,
    \frac{c_4^2}{2c_3}\delta\leq
    \frac{c_4^2}{2c_3}\Bigl(\frac{\delta}{\delta^*}\Bigr)^2\max(\delta,\delta^\ast).
  \end{equation*}
  Combining these three estimates we arrive at the desired inequality with
  $C=\max(12c_1\|w\|+2c_2,\tfrac{c_1c_4^2}{2c_3})$.
\end{proof}

As long as the discrepancy $\delta^\ast$ is of order $\delta$,
Theorems \ref{thm:estimate_hanke_raus} and
\ref{thm:hankrausestimate2} imply that the approximation
$x_{\alpha^\ast}^\delta$ with $\alpha^\ast$ chosen by the rule
\eqref{eq:def_phi} converges to the exact solution $x^\dagger$ at
the same rate as a priori parameter choice rules under identical
source conditions \cite{grasmair2008sparseregularization}. On the
other hand, if $\delta^\ast$ does not decrease as quickly as
$\delta$, then the convergence would be suboptimal. More dangerous
is the case that $\delta^\ast$ decreases more quickly. Then the
prefactor $\delta/\delta^\ast$ blows up, and the approximation may
diverge. Therefore, the value of $\delta^\ast$ should always be
monitored as an a posteriori criterion: The computed approximation
should be discarded if $\delta^\ast$ is deemed too small.

\subsection{Convergence}
\label{sec:convergence} By stipulating additional conditions on the
data $y^\delta$ as in reference~\cite{Hanke1996}, however, we can
get rid of the prefactor $\frac{\delta}{\delta^\ast}$ in the
estimates and even obtain convergence of the method. To show this,
we denote by $Q$ the orthogonal projection onto the orthogonal
complement of the closure of $\range K$.
\begin{corollary}
  \label{cor:existence_alpha*}
  If for the noisy data $y^\delta$, there exists some $\varepsilon>0$ such that
  \begin{equation*}
    \|Q(y^\dagger-y^\delta)\|\geq \varepsilon \|y^\dagger-y^\delta\|,
  \end{equation*}
  then $\alpha^\ast$ according to~\eqref{eq:def_HR} is positive.
  Moreover, under the conditions of
  Theorem~\ref{thm:estimate_hanke_raus}, there holds
  \begin{equation*}
    \bdistance{x_{\alpha^\ast}^\delta}{x^\dagger}\leq
    C\left(1+\frac{1}{\varepsilon^2}\right)\max(\delta,\delta^\ast),
  \end{equation*}
  and under the conditions of Theorem \ref{thm:hankrausestimate2}, there holds
  \begin{equation*}
    \|x_{\alpha^\ast}^\delta-x^\dagger\|\leq
    C\left(1+\frac{1}{\varepsilon^2}\right)\max(\delta,\delta^\ast),
  \end{equation*}
\end{corollary}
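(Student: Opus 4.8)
The plan is to exploit the single structural fact that the orthogonal projection $Q$ onto $(\closure{\range K})^\perp$ annihilates the entire closed range of $K$, and in particular both $y^\dagger$ and every reconstruction $Kx_\alpha^\delta$. Since $y^\dagger\in\range K$ and $Kx_\alpha^\delta\in\range K$, we have $Qy^\dagger=0$ and $QKx_\alpha^\delta=0$, so that
\[
Q(Kx_\alpha^\delta-y^\delta)=-Qy^\delta=Q(y^\dagger-y^\delta)
\]
\emph{independently of $\alpha$}. Because $Q$ is an orthogonal projection, this immediately yields the uniform lower bound
\[
\norm{Kx_\alpha^\delta-y^\delta}\geq\norm{Q(Kx_\alpha^\delta-y^\delta)}=\norm{Q(y^\dagger-y^\delta)}\geq\varepsilon\norm{y^\dagger-y^\delta}
\]
valid for every $\alpha>0$. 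This one inequality drives both assertions.

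For the positivity of $\alpha^\ast$, I would feed the lower bound into the definition $\phi(\alpha)=\norm{Kx_\alpha^\delta-y^\delta}^2/\alpha$, obtaining $\phi(\alpha)\geq\varepsilon^2\norm{y^\dagger-y^\delta}^2/\alpha$. As the data is genuinely noisy, i.e.\ $\norm{y^\dagger-y^\delta}>0$, this forces $\phi(\alpha)\to+\infty$ as $\alpha\to0^+$. On the other hand, $\phi$ is finite and bounded above on any interval $[\alpha_0,\norm{K}^2]$ by Lemma~\ref{lem:continuity_discrepancy}. Hence there is $\alpha_0>0$ with $\phi(\alpha)>\phi(\norm{K}^2)$ for all $\alpha<\alpha_0$, so any minimizer over $[0,\norm{K}^2]$ must lie in $[\alpha_0,\norm{K}^2]$ and is therefore strictly positive.

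For the two error estimates, I would specialize the uniform lower bound to $\alpha=\alpha^\ast$, which gives $\delta^\ast=\norm{Kx_{\alpha^\ast}^\delta-y^\delta}\geq\varepsilon\norm{y^\dagger-y^\delta}$. Identifying $\delta$ with the noise level $\norm{y^\dagger-y^\delta}$ (every estimate in Theorems~\ref{thm:estimate_hanke_raus} and~\ref{thm:hankrausestimate2} uses only $\norm{y^\dagger-y^\delta}\leq\delta$ and hence remains valid for this tightest choice), this reads $\delta^\ast\geq\varepsilon\delta$, i.e.\ $(\delta/\delta^\ast)^2\leq1/\varepsilon^2$. Substituting this bound into the conclusion of Theorem~\ref{thm:estimate_hanke_raus} turns the a~posteriori prefactor $(\delta/\delta^\ast)^2$ into the fixed constant $1/\varepsilon^2$, giving $\bdistance{x_{\alpha^\ast}^\delta}{x^\dagger}\leq C(1+\varepsilon^{-2})\max(\delta,\delta^\ast)$, and the identical substitution into Theorem~\ref{thm:hankrausestimate2} yields the norm estimate, with the same constant $C$.

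The conceptual heart, and the only genuinely nonroutine step, is the first paragraph: recognizing that attainability $y^\dagger\in\range K$ forces the component of the discrepancy lying in $(\closure{\range K})^\perp$ to coincide with the fixed out-of-range part of the noise $Q(y^\dagger-y^\delta)$, so that the discrepancy cannot decay to zero as $\alpha\to0$. Everything afterwards is bookkeeping: the positivity is a blow-up argument for $\phi$ near the origin, and the error bounds are a direct substitution into the previously established theorems. The one point to state carefully is the identification of $\delta$ with $\norm{y^\dagger-y^\delta}$, which is precisely what converts the lower bound on $\delta^\ast$ into a clean bound on the ratio $\delta/\delta^\ast$.
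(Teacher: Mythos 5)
Your proof is correct and follows essentially the same route as the paper: the key inequality $\norm{Kx_\alpha^\delta-y^\delta}\geq\norm{Q(y^\dagger-y^\delta)}\geq\varepsilon\norm{y^\dagger-y^\delta}$ (using $Qy^\dagger=QKx_\alpha^\delta=0$), the resulting blow-up of $\phi$ near $\alpha=0$ for positivity of $\alpha^\ast$, and the substitution $(\delta/\delta^\ast)^2\leq 1/\varepsilon^2$ into Theorems~\ref{thm:estimate_hanke_raus} and~\ref{thm:hankrausestimate2}. Your explicit remark that one identifies $\delta$ with $\norm{y^\dagger-y^\delta}$ is slightly more careful than the paper, which passes from $\delta^\ast\geq\varepsilon\norm{y^\dagger-y^\delta}$ to $\delta^\ast\geq\varepsilon\delta$ without comment.
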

\begin{proof}
  We observe
  \begin{equation}
    \label{eq:est_discrepancy_below}
    \|Kx_\alpha^\delta-y^\delta\|\geq
    \|Q(K(x_\alpha^\delta)-y^\delta)\|=\|Qy^\delta\|=\|Q(y^\delta-y^\dagger)\|\geq
    \varepsilon \|y^\delta-y^\dagger\|.
  \end{equation}
  This shows $\delta^*\geq \epsilon\delta$ and especially
  that $\phi(\alpha)\rightarrow+\infty$ as $\alpha\rightarrow0$.
  Consequently, there exists a positive $\alpha^\ast$ minimizing
  $\phi(\alpha)$ over $[0,\|K\|^2]$. The remaining assertion follows from
  the preceding estimate and the respective error estimate.
\end{proof}

The next theorem shows the convergence of the rule under the
condition that $\|Q(y^\dagger-y^\delta)\|\geq\varepsilon
\|y^\dagger-y^\delta\|$ holds uniformly for the data $y^\delta$ as
$\delta$ tends to zero.
\begin{theorem}
  \label{thm:convergence_hanke_raus}
  Assume that the functional $\mathcal{J}_\alpha$ is coercive and has a unique
  minimizer. Furthermore, in the situation of Theorem~\ref{thm:estimate_hanke_raus}
  let the assumption of Corollary~\ref{cor:existence_alpha*} be fulfilled
  uniformly, i.e.,~there exists an $\epsilon>0$ such that for every
  $\delta >0$, the following inequality holds
  \begin{equation}\label{ass:ydproj}
  \norm{Q(y^\dagger-y^\delta)} \geq \epsilon\norm{y^\dagger-y^\delta}.
  \end{equation}
  Then there holds
  \[
  \bdistance{x_{\alpha^\ast(y^\delta)}^\delta}{x^\dagger}\to 0\ \text{
    for }\ \delta\to 0.
  \]
\end{theorem}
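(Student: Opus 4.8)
The plan is to reduce everything to the a posteriori estimate already furnished by Corollary~\ref{cor:existence_alpha*} and then show that its right-hand side vanishes as $\delta\to 0$. Under the uniform assumption~\eqref{ass:ydproj} the hypotheses of Corollary~\ref{cor:existence_alpha*} hold for every $\delta>0$ with one and the same $\varepsilon$; moreover, uniqueness of the minimizer together with Lemma~\ref{lem:continuity_discrepancy} makes $\phi$ continuous, so that the minimizer $\alpha^\ast=\alpha^\ast(y^\delta)$ in~\eqref{eq:def_HR} exists and is positive. Corollary~\ref{cor:existence_alpha*} then gives
\[
\bdistance{x_{\alpha^\ast}^\delta}{x^\dagger} \leq C\Bigl(1+\tfrac{1}{\varepsilon^2}\Bigr)\max(\delta,\delta^\ast).
\]
Since $C$ and $\varepsilon$ are fixed, it suffices to prove that $\max(\delta,\delta^\ast)\to 0$ as $\delta\to 0$; the factor $\delta$ is harmless, so the whole question reduces to whether the discrepancy $\delta^\ast=\norm{Kx_{\alpha^\ast}^\delta-y^\delta}$ tends to zero.

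To control $\delta^\ast$ I would exploit the minimality of $\alpha^\ast$ against a well-chosen competitor. For $\delta$ small enough that $\delta\leq\norm{K}^2$, the value $\bar\alpha=\delta$ is feasible in~\eqref{eq:def_HR}, whence $\phi(\alpha^\ast)\leq\phi(\delta)$. Bounding the numerator of $\phi(\delta)$ by the discrepancy estimate~\eqref{eq_totalerror_disrc}, i.e.\ $\norm{Kx_\delta^\delta-y^\delta}\leq\delta+2\delta\norm{w}$, yields
\[
\frac{(\delta^\ast)^2}{\alpha^\ast}=\phi(\alpha^\ast)\leq\phi(\delta)\leq\frac{(1+2\norm{w})^2\delta^2}{\delta}=(1+2\norm{w})^2\,\delta.
\]
Because $\alpha^\ast\leq\norm{K}^2$ by construction, multiplying through gives $(\delta^\ast)^2\leq\norm{K}^2(1+2\norm{w})^2\,\delta$, so that $\delta^\ast\leq\norm{K}(1+2\norm{w})\sqrt{\delta}\to 0$ as $\delta\to 0$.

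Combining the two observations, $\max(\delta,\delta^\ast)\to 0$ as $\delta\to 0$, and the displayed estimate forces $\bdistance{x_{\alpha^\ast(y^\delta)}^\delta}{x^\dagger}\to 0$, which is the assertion. The step carrying the real content is the middle one: the uniform lower bound $\delta^\ast\geq\varepsilon\delta$ supplied by Corollary~\ref{cor:existence_alpha*} keeps the prefactor $(\delta/\delta^\ast)^2$ bounded, while the \emph{boundedness} of the feasible interval $[0,\norm{K}^2]$ is precisely what converts the scale-invariant bound $(\delta^\ast)^2/\alpha^\ast\lesssim\delta$ into genuine decay $(\delta^\ast)^2\lesssim\delta$. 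Coercivity and uniqueness enter only to guarantee existence of the minimizers $x_\alpha^\delta$ and continuity of $\phi$, so no compactness argument in the parameter $\alpha^\ast$ itself is required; convergence comes purely from the quantitative estimates.
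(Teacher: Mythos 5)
Your proof is correct, but it takes a genuinely different route from the paper. The paper argues by extracting an accumulation point $\bar\alpha$ of the sequence $\alpha^\ast(y^\delta)$ and treating two cases: for $\bar\alpha=0$ it splits the error via Corollary~\ref{cor:splitting_approx_data_error}, bounds the data error by $\phi(\alpha^\ast)/(2\epsilon^2)$ (shown to vanish by comparison with an a priori choice $\alpha(\delta)$) and the approximation error by $\alpha^\ast\norm{w}^2/2\to 0$; for $\bar\alpha>0$ it invokes coercivity, uniqueness and weak lower semicontinuity to show that the weak limit of $x_{\alpha^\ast}^\delta$ is an $R$-minimizing solution. You instead observe that the a posteriori bound of Corollary~\ref{cor:existence_alpha*}, whose constant is uniform in $\delta$ (for $\delta\leq 1$ one has $\bar c=\norm{K}^2$ in the proof of Theorem~\ref{thm:estimate_hanke_raus}, so this is fine), reduces everything to $\delta^\ast\to 0$, and that the scale-invariant bound $\phi(\alpha^\ast)\leq(1+2\norm{w})^2\delta$ combined with the boundedness $\alpha^\ast\leq\norm{K}^2$ forces $\delta^\ast\leq\norm{K}(1+2\norm{w})\sqrt{\delta}$. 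This mechanism actually appears inside the paper's case $\bar\alpha>0$ (where $\norm{Kx_{\alpha^\ast}^\delta-y^\delta}\to 0$ is derived in exactly this way), but the paper then detours through the weak-compactness argument rather than feeding $\delta^\ast\to 0$ back into the a posteriori estimate. What your route buys is brevity, an explicit (if likely suboptimal) rate $\bigO(\sqrt{\delta})$, and the fact that coercivity and uniqueness are needed only for well-posedness of the minimizations and continuity of $\phi$; what the paper's route buys is the structural information that any positive accumulation point $\bar\alpha$ yields $x_{\bar\alpha}=x^\dagger$, and an argument template that survives when no quantitative a posteriori estimate is available.
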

\begin{proof}
  By the definition of $\alpha^\ast$, we observe that the sequence
  $(\alpha^\ast\equiv\alpha^\ast(y^\delta))_{\delta\geq 0}$ is  uniformly
  bounded and hence, there exists an accumulation point $\bar\alpha$.
  We distinguish the two cases $\bar \alpha= 0$ and $\bar\alpha>0$.

  We first consider the case $\bar\alpha=0$. By Corollary~\ref{cor:splitting_approx_data_error},
  we split the error
  \begin{equation}
    \label{eq:aux_split_bdistance}
    \bdistance{x_{\alpha^\ast}^\delta}{x^\dagger} \leq
    \bdistance{x_{\alpha^\ast}^\delta}{x_{\alpha^*}} +
    \bdistance{x_{\alpha^\ast}}{x^\dagger} + 6\norm{w}\delta,
  \end{equation}
  and estimate the data and approximation errors separately.

  For the data error $\bdistance{x_{\alpha^\ast}^\delta}{x_{\alpha^\ast}}$,
  we deduce from inequality~\eqref{eq:est_discrepancy_below}
  and assumption \eqref{ass:ydproj} that
  \[
  \bdistance{x_{\alpha^\ast}^\delta}{x_{\alpha^\ast}}\leq
  \frac{\delta^2}{2\alpha^\ast}\leq \frac{\norm{Kx_{\alpha^\ast}^\delta -
      y^\delta}^2}{2\epsilon^2\alpha^\ast} =
  \frac{\phi(\alpha^\ast)}{2\epsilon^2}.
  \]
  Therefore, it suffices to show that $\phi(\alpha^\ast)$
  goes to zero as $\delta\rightarrow0$. By Proposition \ref{prop:estimate_total_error}, there holds for
  every $\alpha\in [0,\norm{K}^2]$ that
  \[
  \phi(\alpha^\ast)\leq \phi(\alpha)\leq
  \Bigl(\frac{\delta}{\sqrt{\alpha}} + 2\norm{w}\sqrt{\alpha}\Bigr)^2.
  \]
  Hence, we may choose $\alpha(\delta)$ in the usual way such that
  $\alpha(\delta)\to 0$ and $\delta^2/\alpha(\delta)\to 0$ for
  $\delta\to 0$. This shows $\phi(\alpha^\ast)\to 0$ for
  $\delta\to 0$.

  For the approximation error $\bdistance{x_{\alpha^\ast}}{x^\dagger}$,
  we deduce from the fact that $\bar\alpha=0$ and estimate~\eqref{eq:approxerror} that
  \[
  \bdistance{x_{\alpha^\ast}}{x^\dagger} \leq
  \frac{\alpha^\ast\norm{w}^2}{2} \to \frac{\bar\alpha\norm{w}^2}{2} = 0\
  \text{ for }\ \delta\to 0.
  \]
  Hence, all three terms on the right hand side of
  inequality~\eqref{eq:aux_split_bdistance} tend to zero for $\delta\to 0$
  as desired.

  Next we consider the remaining case $\bar\alpha>0$.  we use $\alpha^\ast\leq\norm{K}^2$ to get
  \[
  \phi(\alpha^\ast) \geq \frac{\norm{Kx_{\alpha^\ast}^\delta -
      y^\delta}^2}{\norm{K}^2}\geq 0.
  \]
  Since $\phi(\alpha^\ast)$ goes to zero for $\delta\to 0$ we deduce that
  $\norm{Kx_{\alpha^\ast}^\delta - y^\delta}$ tends to zero as well.
  Next by the minimizing property of $x_{\alpha^\ast}^\delta$, we have
  \begin{equation*}
    \tfrac{1}{2}\|Kx_{\alpha^\ast}^\delta-y^\delta\|^2+\alpha^\ast R(x_{\alpha^\ast}^\delta)
    \leq \tfrac{1}{2}\|Kx^\dagger-y^\delta\|^2+\alpha^\ast
    R(x^\dagger).
  \end{equation*}
  Therefore, both sequences $(\|Kx_{\alpha^\ast}^\delta -y^\delta\|)_\delta$ and
  $(R(x_{\alpha^\ast}^\delta))_\delta$ are uniformly bounded by
  the assumption $\bar{\alpha}>0$. By the coercivity of the functional $\mathcal{J}_\alpha$, the
  sequence $(x_{\alpha^\ast}^\delta)_\delta$ is uniformly bounded, and thus
  there exists a subsequence, possibly after relabeling as
  $(x_{\alpha^\ast}^\delta)_\delta$, that converges weakly to some
  $\hat{x}$. By the weak lower semicontinuity of the norm and the functional $R$, we have
  \begin{equation}\label{eqn:weaklsc}
    \|K\hat{x}-y^\dagger\|\leq\liminf_{\delta\rightarrow0}\|Kx_{\alpha^\ast}^\delta-y^\delta\|=0,\quad
    R(\hat{x})\leq
    \liminf_{\delta\rightarrow0}R(x_{\alpha^\ast}^\delta).
  \end{equation}
  Consequently, for any $x$
  \begin{equation}\label{eqn:minimizing}
    \begin{aligned}
      \tfrac{1}{2}\|K\hat{x}-y^\dagger\|^2+\bar{\alpha}R(\hat{x})&\leq\liminf_{\delta\rightarrow0}
      \tfrac{1}{2}\|Kx_{\alpha^\ast}^\delta-y^\delta\|^2+
      \liminf_{\delta\rightarrow0}\alpha^\ast R(x_{\alpha^\ast}^\delta)\\
      &\leq\liminf_{\delta\rightarrow0}\left\{\tfrac{1}{2}\|Kx_{\alpha^\ast}^\delta-y^\delta\|^2+\alpha^\ast
      R(x_{\alpha^\ast}^\delta)\right\}\\
      &\leq\liminf_{\delta\rightarrow0}\left\{\tfrac{1}{2}\|Kx-y^\delta\|^2+\alpha^\ast
      R(x)\right\}\\
      &=\tfrac{1}{2}\|Kx-y^\dagger\|^2+\bar{\alpha}R(x).
  \end{aligned}
  \end{equation}
  Hence $\hat{x}$ is a minimizer of the functional $\mathcal{J}_{\bar{\alpha}}$,
  and by the uniqueness of the minimizer, $\hat{x}=x_{\bar\alpha}$.
  Since this holds for every subsequence, the whole sequence converges
  weakly. Moreover, by the weak lower semicontinuity, we have
  \begin{equation}\label{eqn:convR}
    R(x_{\alpha^\ast}^\delta)\rightarrow R(x_{\bar\alpha}).
  \end{equation}
  Next we show that $x_{\bar\alpha}$ is an $R$-minimizing
  solution to the equation $Kx=y^\dagger$. However, this follows directly
  from inequality \eqref{eqn:weaklsc} that $\|Kx_{\bar\alpha}-y^\dagger\|=0$,
  and from inequality \eqref{eqn:minimizing}
  \begin{equation*}
    R(x_{\bar{\alpha}})\leq R(x)\quad \forall x,
  \end{equation*}
  which in particular by choosing $x$ in the set of $R$-minimizing
  solutions shows the claim. Now we deduce that
  \begin{equation*}
  \lim_{\delta\rightarrow0}\bdistance{x_{\alpha^\ast}^\delta}{x^\dagger}=
  \lim_{\delta\rightarrow0}\left(
  R(x_{\alpha^\ast}^\delta)-R(x^\dagger)-\scp{\xi}{x_{\alpha^\ast(y^\delta)}^\delta-x^\dagger}\right)=0.
  \end{equation*}
  by observing identity \eqref{eqn:convR} and the weak convergence
  of the sequence $(x_{\alpha^\ast(y^\delta)}^\delta)_\delta$ to
  $x^\dagger$. This concludes the proof of the theorem.
\end{proof}

\begin{remark}
In Theorem \ref{thm:convergence_hanke_raus}, the uniqueness
assumption on the functional $\mathcal{J}_\alpha$ can be relaxed as
equation \eqref{eqn:convR} holds for each weakly convergent
subsequence. We have utilized the uniqueness of $R$-minimizing
solution, which may also be dropped by restating the result as: then
there exists some $R$-minimizing solution $x^\dagger$, such that
  \[
   \bdistance{x_{\alpha^\ast}^\delta}{x^\dagger}\to 0\ \text{
    for }\ \delta\to 0.
  \]
\end{remark}

In our context we are able to further weaken the assumption~\eqref{ass:ydproj} on the noise.
\begin{corollary}
  \label{cor:existence_alpha*_var}
  If there exists an $\epsilon\in ]0,1[$ such that for
  all $z\in \closure{K(\dom \partial R)}$ the following inequality holds
  \begin{equation*}
    \scp{y^\delta-y^\dagger}{z} \leq (1-\epsilon)\norm{y^\delta-y^\dagger}\norm{z}.
  \end{equation*}
  then the minimizer $\alpha^\ast$ to
  $\phi(\alpha)$ is positive. Moreover, under the conditions of Theorem
  \ref{thm:estimate_hanke_raus}, there holds
  \begin{equation*}
    \bdistance{x_{\alpha^\ast}^\delta}{x^\dagger}\leq
    C\left(1+\frac{1}{1-(1-\epsilon)^2}\right)\max(\delta,\delta^\ast),
  \end{equation*}
  and under the conditions of Theorem \ref{thm:hankrausestimate2}, there
  holds
  \begin{equation*}
    \|x_{\alpha^\ast}^\delta-x^\dagger\|\leq
    C\left(1+\frac{1}{1-(1-\epsilon)^2}\right)\max(\delta,\delta^\ast),
  \end{equation*}
\end{corollary}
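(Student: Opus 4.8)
The plan is to follow the proof of Corollary~\ref{cor:existence_alpha*} almost verbatim, changing only the lower bound~\eqref{eq:est_discrepancy_below} for the discrepancy: the orthogonal-projection estimate is replaced by one extracted from the new angle condition. Concretely, I would first establish a lower bound, \emph{uniform in} $\alpha$,
\[
\norm{Kx_\alpha^\delta - y^\delta} \geq \sqrt{1-(1-\epsilon)^2}\,\norm{y^\delta-y^\dagger},
\]
and then feed the resulting inequality $\delta^\ast \geq \sqrt{1-(1-\epsilon)^2}\,\delta$ into Theorems~\ref{thm:estimate_hanke_raus} and~\ref{thm:hankrausestimate2}, exactly as $\delta^\ast\geq\varepsilon\delta$ was used before.

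To get the discrepancy bound I would split the residual as $r = z - n$ with $z = Kx_\alpha^\delta - y^\dagger$ and $n = y^\delta - y^\dagger$, using $Kx^\dagger = y^\dagger$ so that $Kx_\alpha^\delta - y^\delta = (Kx_\alpha^\delta-y^\dagger)-(y^\delta-y^\dagger)$. Expanding the norm gives
\[
\norm{r}^2 = \norm{z}^2 - 2\scp{n}{z} + \norm{n}^2,
\]
and, once $z\in\closure{K(\dom\partial R)}$ is justified, the hypothesis $\scp{n}{z}\leq(1-\epsilon)\norm{n}\norm{z}$ yields $\norm{r}^2 \geq \norm{z}^2 - 2(1-\epsilon)\norm{n}\norm{z} + \norm{n}^2$. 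Minimizing the right-hand side over $\norm{z}\geq 0$ (the minimum is attained at $\norm{z} = (1-\epsilon)\norm{n}$) produces $\norm{r}^2 \geq (1-(1-\epsilon)^2)\norm{n}^2$, which is precisely the asserted bound. Since this holds for every $\alpha$ and the right-hand side is a fixed positive multiple of $\norm{y^\delta-y^\dagger}^2$ (the case $y^\delta=y^\dagger$ being trivial), it follows that $\phi(\alpha)=\norm{Kx_\alpha^\delta-y^\delta}^2/\alpha\to+\infty$ as $\alpha\to 0$; together with Lemma~\ref{lem:continuity_discrepancy} this forces the minimizer $\alpha^\ast$ over $[0,\norm{K}^2]$ to be positive.

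With $\alpha^\ast>0$ and $\delta^\ast = \norm{Kx_{\alpha^\ast}^\delta-y^\delta}\geq\sqrt{1-(1-\epsilon)^2}\,\delta$ in hand, the prefactor is controlled by $(\delta/\delta^\ast)^2 \leq 1/(1-(1-\epsilon)^2)$. Substituting this bound into the estimate of Theorem~\ref{thm:estimate_hanke_raus} gives the Bregman-distance inequality, and into that of Theorem~\ref{thm:hankrausestimate2} the $\ell^1$-norm inequality, both with the stated constant $C(1 + 1/(1-(1-\epsilon)^2))$.

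The step I expect to be the main obstacle is the membership $z = Kx_\alpha^\delta - y^\dagger = K(x_\alpha^\delta - x^\dagger)\in\closure{K(\dom\partial R)}$, on which the use of the hypothesis hinges. The ingredients are that $x_\alpha^\delta\in\dom\partial R$ (the optimality condition gives $-K^*(Kx_\alpha^\delta-y^\delta)/\alpha\in\partial R(x_\alpha^\delta)$, as already used for $x_\alpha$) and that $x^\dagger\in\dom\partial R$ (the source condition~\eqref{eq:source_condition1}). Under the hypotheses of Theorem~\ref{thm:estimate_hanke_raus} the functional $R$ is finite with $\dom\partial R = X$, so $z\in\range K\subseteq\closure{K(\dom\partial R)}$ at once; under the hypotheses of Theorem~\ref{thm:hankrausestimate2}, $\dom\partial R$ is the linear subspace of finitely supported sequences, which contains both $x_\alpha^\delta$ and $x^\dagger$ and hence their difference, so again $z\in K(\dom\partial R)$. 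The remaining subtlety — identifying $\norm{y^\delta-y^\dagger}$ with $\delta$ in the prefactor — is handled exactly as in Corollary~\ref{cor:existence_alpha*}.
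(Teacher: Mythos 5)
Your proof is correct and follows essentially the same route as the paper: the same expansion of $\norm{Kx_\alpha^\delta-y^\delta}^2$ with $z=K(x_\alpha^\delta-x^\dagger)$ and $n=y^\delta-y^\dagger$, the same use of the angle condition followed by completing the square (which is what your minimization over $\norm{z}$ amounts to), the resulting lower bound $(\delta^\ast)^2\geq(1-(1-\epsilon)^2)\delta^2$ forcing $\phi(\alpha)\to\infty$ as $\alpha\to 0$, and substitution of $(\delta/\delta^\ast)^2\leq 1/(1-(1-\epsilon)^2)$ into Theorems~\ref{thm:estimate_hanke_raus} and~\ref{thm:hankrausestimate2}. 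Your extra care about why $z\in\closure{K(\dom\partial R)}$ is a detail the paper glosses over rather than a different argument.
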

\begin{proof}
  By observing the fact that both $x_\alpha^\delta$ and $x^\dagger$ are in
  $\dom\partial R$ and the assumption on the noise $y^\dagger-y^\delta$, we derive
  \begin{align*}
    \norm{Kx_\alpha^\delta-y^\delta}^2 & = \norm{K(x_\alpha^\delta-x^\dagger) - (y^\delta-y^\dagger)}^2\\
    & = \norm{K(x_\alpha^\delta-x^\dagger)}^2 - 2\scp{K(x_\alpha^\delta-x)}{y^\delta-y^\dagger} +  \norm{y^\delta-y^\dagger}^2\\
    & \geq \norm{K(x_\alpha^\delta-x^\dagger)}^2 - 2(1-\epsilon)\norm{K(x_\alpha^\delta-x^\dagger)}\norm{y^\delta-y^\dagger}
      +  \norm{y^\delta-y^\dagger}^2\\
    & = (\norm{K(x_\alpha^\delta-x^\dagger)} - (1-\epsilon)\norm{y^\delta-y^\dagger})^2 + (1-(1-\epsilon)^2)\norm{y^\delta-y^\dagger}^2\\
    & \geq (1-(1-\epsilon)^2)\delta^2.
  \end{align*}
  This in particular implies $(\delta^*)^2\geq (1-(1-\epsilon)^2)\delta^2$ and
  consequently that $\phi(\alpha)\rightarrow+\infty$ as $\alpha\rightarrow0$.
  Therefore, there exists a positive $\alpha^\ast$ minimizing
  $\phi(\alpha)$ over $[0,\|K\|^2]$. The remaining assertion follows
  similar to the proof of Theorem~\ref{thm:convergence_hanke_raus}.
\end{proof}

\begin{remark}[Comparing the assumptions on the noise]
  In Corollary~\ref{cor:existence_alpha*} or
  Theorem~\ref{thm:convergence_hanke_raus} we assumed
  \begin{equation*}
    \exists \epsilon\in]0,1[\,\,\forall\delta>0:\ \norm{Q(y^\dagger-y^\delta)}\geq\epsilon\norm{y^\dagger-y^\delta}
  \end{equation*}
  which is, with $P$ denoting the orthogonal projector onto
  $\closure{\range K}$, equivalent to
  \begin{equation}
    \label{eq:noise_condition_rgK}
    \exists \epsilon'>]0,1[\,\,\forall\delta>0:\ \norm{P(y^\dagger-y^\delta)}\leq\epsilon'\norm{y^\dagger-y^\delta}.
  \end{equation}
  In Corollary~\ref{cor:existence_alpha*_var} we assumed
  \begin{equation}
    \label{eq:noise_condition_rgKdomR}
    \exists \epsilon''\in]0,1[\forall\delta>0\forall z\in\closure{K(\dom \partial R)}:\ \scp{y^\delta-y^\dagger}{z}
    \leq \epsilon''\norm{y^\delta-y^\dagger}\norm{z}.
  \end{equation}
  In the case $\dom \partial R = X$ we conclude from this assumption
  that~\eqref{eq:noise_condition_rgK} holds with $\epsilon' =
  \sqrt{1-\epsilon''}$. Hence, in this
  case~\eqref{eq:noise_condition_rgKdomR}
  implies~\eqref{eq:noise_condition_rgK}. However, if $\dom \partial
  R$ is strictly contained in $X$
  condition~\eqref{eq:noise_condition_rgKdomR} may be considerably
  weaker.
\end{remark}

\section{The quasi-optimality principle}
\label{sec:quasi-optim-princ}

In this part, we derive another error-estimate based heuristic
choice rule, i.e., the quasi-optimality principle, and discuss its
convergence properties. The motivation of the principle is as
follows: By Proposition~\ref{prop:estimate_two_regularizations} for
any $q\in]0,1[$, there holds
\[
\bdistance{x_{q\alpha}^\delta}{x_\alpha^\delta}\leq
\frac{(1-q)^2}{2q}\phi(\alpha).
\]
In particular, for a geometrically decreasing sequence of
regularization parameters, the Bregman distances of two consecutive
regularized solutions are bounded from above by a constant times the
estimator $\phi$. This suggests itself a parameter choice rule which
resembles the classical quasi-optimality
criterion~\cite{Tikhonov1977illposed,Tikhonov1979}. More precisely,
for given data $y^\delta$ and $q\in]0,1[$ we define a
\emph{quasi-optimality sequence} as
\[
\mu_k = \bdistance{x_{q^k}^\delta}{x_{q^{k-1}}^\delta}.
\]
The quasi-optimality principle consists of choosing the
regularization parameter $\alpha^{\mathrm{qo}} = q^k$ such that
$\mu_k$ is minimal over a given range $k\geq k_0$.
\begin{remark}
  The classical quasi-optimality principle as e.g.,~stated
  in~\cite{Tikhonov1977illposed,Tikhonov1979}, chooses
  $\alpha^{\mathrm{qo}}$ such that the quantity
  $\norm{\alpha\frac{dx_\alpha^\delta}{d\alpha}}$ is minimal. In our
  setting this approach seems not applicable since the mapping
  $\alpha\mapsto x_\alpha^\delta$ is in general not differentiable.
  For instance, in the case of $\ell^1$ regularization, the solution
  path, i.e., $x_\alpha^\delta$ with respect to $\alpha$, is piecewise
  linear~\cite{efron2004lars}. Hence we resort to the discrete version
  which is also used in~\cite{Bauer2009}.
\end{remark}

We shall follow closely the lines of reference~\cite{Glasko1984} and
start with some basic observations of the quasi-optimality sequence.
The quasi-optimality sequence for the exact data will be denoted by
\[
\mu_k^\dagger = \bdistance{x_{q^k}}{x_{q^{k-1}}}.
\]

\begin{lemma}
  \label{lem:asymptotics_mu}
  Let the source condition \eqref{eq:source_condition1} be satisfied. Then
  the quasi-optimality sequences $(\mu_k)_k$ and $(\mu_k^\dagger)_k$ fulfill
  \begin{enumerate}
  \item $\displaystyle\lim_{k\to-\infty}\mu_k = 0$,
  \item  $\displaystyle\lim_{k\to-\infty}\mu_k^\dagger = 0$ and $\displaystyle\lim_{k\to\infty}\mu_k^\dagger = 0$.
  \end{enumerate}
\end{lemma}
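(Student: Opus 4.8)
The plan is to reduce all three limits to the two-parameter estimate of Proposition~\ref{prop:estimate_two_regularizations} combined with the asymptotics of the discrepancy from Lemma~\ref{lem:continuity_discrepancy} and the approximation-error bound~\eqref{eq:approxerror_disrc}. The starting observation is that applying~\eqref{eq:two_reg_error} with $\alpha = q^{k-1}$, so that $x_{q\alpha}^\delta = x_{q^k}^\delta$, gives directly
\[
\mu_k = \bdistance{x_{q^k}^\delta}{x_{q^{k-1}}^\delta} \leq \frac{(1-q)^2}{2q}\,\phi(q^{k-1}),
\]
with $\phi$ as in~\eqref{eq:def_phi}. Hence the behaviour of $\mu_k$ is governed entirely by that of $\phi$.

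For assertion (1) I would argue that as $k\to-\infty$ the argument $q^{k-1}\to+\infty$, and since Lemma~\ref{lem:continuity_discrepancy} guarantees that $\norm{Kx_\alpha^\delta-y^\delta}$ stays uniformly bounded, the quotient $\phi(\alpha)=\norm{Kx_\alpha^\delta-y^\delta}^2/\alpha$ tends to zero as $\alpha\to+\infty$. The displayed bound then forces $\mu_k\to 0$.

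For assertion (2) I would first record the exact-data analogue of the two-parameter estimate: running the proof of Proposition~\ref{prop:estimate_two_regularizations} with $y^\delta$ replaced by $y^\dagger$ (equivalently $\delta=0$) and $\xi_\alpha=-K^*(Kx_\alpha-y^\dagger)/\alpha$ yields
\[
\mu_k^\dagger = \bdistance{x_{q^k}}{x_{q^{k-1}}} \leq \frac{(1-q)^2}{2q}\,\frac{\norm{Kx_{q^{k-1}}-y^\dagger}^2}{q^{k-1}}.
\]
The two limits are then settled by the two opposite regimes of the numerator. As $k\to-\infty$ the denominator $q^{k-1}\to+\infty$ while the numerator stays bounded, the boundedness being the exact-data version of Lemma~\ref{lem:continuity_discrepancy}, whose proof carries over verbatim on comparing $x_\alpha$ with an $R$-minimizing element, so the right-hand side vanishes. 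As $k\to+\infty$ I would instead invoke the source condition through the discrepancy bound~\eqref{eq:approxerror_disrc}, namely $\norm{Kx_\alpha-y^\dagger}\leq 2\norm{w}\alpha$, which gives $\norm{Kx_{q^{k-1}}-y^\dagger}^2/q^{k-1}\leq 4\norm{w}^2 q^{k-1}\to 0$; the displayed bound again yields $\mu_k^\dagger\to 0$.

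The only point needing slight care is the transfer of Proposition~\ref{prop:estimate_two_regularizations} and of Lemma~\ref{lem:continuity_discrepancy} to the exact data $y^\dagger$, which is routine since the hypotheses on $y^\dagger$ match those on $y^\delta$. I deliberately would not claim $\mu_k\to 0$ as $k\to+\infty$: for noisy data the numerator $\norm{Kx_\alpha^\delta-y^\delta}^2$ need not decay like $\alpha^2$, and it is precisely this discrepancy between $\mu_k$ and $\mu_k^\dagger$ at small $\alpha$, detected by minimizing $\mu_k$, that the quasi-optimality principle exploits.
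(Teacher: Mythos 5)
Your proposal is correct and follows essentially the same route as the paper's own proof: both bound $\mu_k$ by $\tfrac{(1-q)^2}{2q}\,\phi(q^{k-1})$ via Proposition~\ref{prop:estimate_two_regularizations}, use the uniform boundedness of the discrepancy from Lemma~\ref{lem:continuity_discrepancy} for the limits as $k\to-\infty$ (setting $\delta=0$ for the exact-data case), and invoke~\eqref{eq:approxerror_disrc} for $\mu_k^\dagger\to 0$ as $k\to+\infty$. Your explicit remark about transferring the two-parameter estimate to exact data, and your closing observation on why $\mu_k\to 0$ is not claimed for $k\to+\infty$, are both consistent with the paper.
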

\begin{proof}
  Appealing to estimate~\eqref{eq:two_reg_error_disrc}, we have
  \[
  \mu_k \leq
  \frac{(1-q)^2}{2q}\frac{\norm{Kx_{q^{k-1}}^\delta-y^\delta}^2}{q^{k-1}}.
  \]
  Since the sequence $\left(\|Kx_{q^{k-1}}^\delta-y^\delta\|\right)_k$
  stays bounded for $k\to-\infty$, see Lemma
  \ref{lem:continuity_discrepancy}, the first claim follows directly. Setting
  $\delta=0$ in the above argumentation shows the first statement of
  Claim 2. Now we use inequality~\eqref{eq:approxerror_disrc} and
  estimate
  \[
  \mu_k^\dagger \leq \frac{(1-q)^2}{2}\frac{\norm{Kx_{q^{k-1}}-y^\dagger}^2}{q^k}
  \leq 2(1-q)^2\norm{w}^2q^{k-2}.
  \]
  This shows the second statement of Claim 2.
\end{proof}

Now we show that the quasi-optimality sequences for exact and noisy
data approximate each other for vanishing noise level.
\begin{lemma}
  \label{lem:qo_mu_approx}
  Let the source condition \eqref{eq:source_condition1} be
  satisfied. Then for any $k_1\in\ZZ$, there holds
  \[
  \lim_{\delta\to 0}\sup_{k\leq k_1}\abs{\mu_k-\mu_k^\dagger} = 0.
  \]
\end{lemma}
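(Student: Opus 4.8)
The plan is to exploit that for $k\leq k_1$ the regularization parameters $q^k$ are bounded below by $q^{k_1}>0$ and grow without bound as $k\to-\infty$, so that the supremum splits into a uniformly small tail and a finite range on which pointwise stability of the regularized solutions can be invoked. Concretely, given $\eta>0$, I would first choose an index $k_0\leq k_1$ so that both $\mu_k$ and $\mu_k^\dagger$ lie below $\eta/2$ for all $k\leq k_0$, uniformly in $\delta$, and then treat the finitely many indices $k_0<k\leq k_1$ by letting $\delta\to 0$.

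For the tail I would reuse the estimates from the proof of Lemma~\ref{lem:asymptotics_mu}: by Proposition~\ref{prop:estimate_two_regularizations} applied with $\alpha=q^{k-1}$,
\[
\mu_k \leq \frac{(1-q)^2}{2q}\frac{\norm{Kx_{q^{k-1}}^\delta-y^\delta}^2}{q^{k-1}},\qquad
\mu_k^\dagger \leq \frac{(1-q)^2}{2q}\frac{\norm{Kx_{q^{k-1}}-y^\dagger}^2}{q^{k-1}}.
\]
By Lemma~\ref{lem:continuity_discrepancy} the noisy discrepancy is bounded by $\norm{K\tilde x-y^\delta}\leq\norm{K\tilde x-y^\dagger}+\delta$, hence uniformly bounded for $\delta\leq 1$, and the exact discrepancy is bounded likewise. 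Since $q^{-(k-1)}\to 0$ as $k\to-\infty$, both right-hand sides tend to zero uniformly in $\delta\leq 1$, which furnishes the required $k_0$ and yields $\abs{\mu_k-\mu_k^\dagger}<\eta$ for all $k\leq k_0$ and all $\delta\leq 1$.

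For the finite range, the key observation is that, using the Hilbert-space structure of $Y$ and the subgradient $\xi_{q^{k-1}}^\delta=-K^*(Kx_{q^{k-1}}^\delta-y^\delta)/q^{k-1}$, each $\mu_k$ can be written as
\[
\mu_k = R(x_{q^k}^\delta)-R(x_{q^{k-1}}^\delta)
+\frac{1}{q^{k-1}}\scp{Kx_{q^{k-1}}^\delta-y^\delta}{K(x_{q^k}^\delta-x_{q^{k-1}}^\delta)},
\]
so that it depends on the noisy solutions only through the regularized $R$-values and the images $Kx_{q^k}^\delta,Kx_{q^{k-1}}^\delta$ (quantities that are moreover independent of the particular minimizer chosen). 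I would then invoke the standard stability of convex Tikhonov regularization at a fixed $\alpha>0$: passing to subsequences and using the minimizing property together with the weak lower semicontinuity of $R$ and of $\norm{\cdot}$, one shows that as $\delta\to 0$ the regularized solutions converge weakly to a minimizer for exact data, that $R(x_\alpha^\delta)\to R(x_\alpha)$, and that the residuals converge strongly, $Kx_\alpha^\delta\to Kx_\alpha$ in $Y$. Substituting these limits termwise into the displayed expression gives $\mu_k\to\mu_k^\dagger$ for each fixed $k$; as $k_0<k\leq k_1$ ranges over finitely many integers, $\max_{k_0<k\leq k_1}\abs{\mu_k-\mu_k^\dagger}\to 0$. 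Combining the two regimes with an $\eta/2$ argument yields the claim.

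The main obstacle is the strong convergence $Kx_\alpha^\delta\to Kx_\alpha$, which is precisely what makes the inner-product term converge: one obtains $Kx_\alpha^\delta-y^\delta\rightharpoonup Kx_\alpha-y^\dagger$ weakly from weak convergence of $x_\alpha^\delta$, and upgrades this to strong convergence in the Hilbert space $Y$ by showing norm convergence of the residual, which follows from matching the Tikhonov functional values in the limit (the sum converging while each term is weakly lower semicontinuous forces both the $R$-value and the squared residual to converge separately). A secondary point requiring care is the uniformity of the tail bound in $\delta$, which rests on the uniform boundedness of the discrepancy established in Lemma~\ref{lem:continuity_discrepancy}.
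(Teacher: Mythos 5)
Your proof is correct, but it takes a genuinely different route from the paper's. The paper proves the uniform statement in one stroke: expanding the Bregman distances gives the algebraic identity
$\mu_k-\mu_k^\dagger = \bdistance{x_{q^k}^\delta}{x_{q^k}} - \bdistance{x_{q^{k-1}}^\delta}{x_{q^{k-1}}} + \scp{\xi_{k-1}-\xi_{k-1}^\delta}{x_{q^k}^\delta-x_{q^{k-1}}^\delta} + \scp{\xi_{k}-\xi_{k-1}}{x_{q^k}^\delta-x_{q^k}}$,
and each of the four terms is bounded explicitly via \eqref{eq:dataerror}, \eqref{eq:dataerror_disrc}, \eqref{eq:approxerror_disrc} and \eqref{eq:two_reg_error_disrc}; since $1/q^{k}\leq 1/q^{k_1}$ for $k\leq k_1$, all bounds are of order $\delta$ uniformly in $k\leq k_1$, so the supremum tends to zero with an explicit rate and no compactness is needed. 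You instead split $\sup_{k\leq k_1}$ into a tail $k\leq k_0$, killed uniformly in $\delta$ by the same discrepancy-based bound the paper uses in Lemma~\ref{lem:asymptotics_mu}, and a finite range handled by the soft stability of Tikhonov minimizers at fixed $\alpha>0$ (weak subsequential convergence, convergence of the $R$-values, and strong convergence of the residuals in the Hilbert space $Y$). Your observation that $\mu_k$ depends on the noisy minimizers only through $R(x_{q^j}^\delta)$ and $Kx_{q^j}^\delta$, which are minimizer-independent, is a nice way to make the quantity well defined without uniqueness. What your route costs is twofold: the compactness step tacitly requires weak sequential compactness of bounded sets (reflexivity of $X$ or a comparable structure, which the paper also uses in Theorem~\ref{thm:convergence_hanke_raus} but does not need here), and the argument yields no rate, whereas the paper's direct perturbation estimate gives $\sup_{k\leq k_1}\abs{\mu_k-\mu_k^\dagger}=\bigO(\delta)$ for each fixed $k_1$ under nothing more than the source condition.
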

\begin{proof}
We will use the abbreviations $x_k^\delta = x_{q^k}^\delta$, $x_k =
x_{q^k}$, $\xi_k^\delta = -K^*(Kx_k^\delta - y^\delta)$ and $\xi_k =
-K^*(Kx_k-y)$ to simplify the notation. By the definition of $\mu_k$
and $\mu_k^\dagger$, we have
  \begin{align*}
    \abs{\mu_k-\mu_k^\dagger} & = \abs{\bdistance{x_k^\delta}{x_{k-1}^\delta} - \bdistance{x_k}{x_{k-1}}}\\
    & = \bigl|R(x_k^\delta) - R(x_{k-1}^\delta) - \scp{\xi_{k-1}^\delta}{x_k^\delta - x_{k-1}^\delta}
     \\
     & \qquad - R(x_k) + R(x_{k-1}) + \scp{\xi_{k-1}}{x_k - x_{k-1}}\bigr|\\
    & = |\bdistance{x_k^\delta}{x_k} - \bdistance{x_{k-1}^\delta}{x_{k-1}} \\
    & \qquad + \scp{\xi_{k-1}-\xi_{k-1}^\delta}{x_k^\delta-x_{k-1}^\delta}
    + \scp{\xi_k-\xi_{k-1}}{x_k^\delta-x_k}|.
  \end{align*}
  Now we estimate all four terms separately. Using inequality~\eqref{eq:dataerror}
  we can bound the first two terms by
  \[
  \bdistance{x_k^\delta}{x_k} \leq \frac{\delta^2}{2q^k},\qquad
  \bdistance{x_{k-1}^\delta}{x_{k-1}} \leq \frac{\delta^2}{2q^{k-1}}.
  \]
  For the third term, we get from estimates~\eqref{eq:dataerror_disrc} and~\eqref{eq:two_reg_error_disrc}
  \begin{align*}
    \scp{\xi_{k-1}-\xi_{k-1}^\delta}{x_k^\delta-x_{k-1}^\delta} & =
    \scp{K(x_{k-1}-x_{k-1}^\delta) + (y^\delta-y^\dagger)}{K(x_k^\delta-x_{k-1}^\delta)}/q^{k-1}\\
    & \leq (\norm{K(x_{k-1}-x_{k-1}^\delta)} +
    \delta)\norm{K(x_k^\delta-x_{k-1}^\delta)}/q^{k-1}\\
    & \leq 6\delta(1-q)(\delta + 2q^k\norm{w})/q^{k-1}.
  \end{align*}
  Similarly, we can estimate the last term by
  \[
  \scp{\xi_k-\xi_{k-1}}{x_k^\delta-x_k} \leq 8\delta(1-q)\norm{w}/q.
  \]
  Hence, all four terms are bounded for $k\leq k_1$ and decrease to zero
  as $\delta\to 0$. This proves the claim.
\end{proof}

In general, the quasi-optimality sequences $(\mu_k)_k$ and
$(\mu^\dagger_k)_k$ can vanish for finite indices $k$. Fortunately,
their positivity can be guaranteed for a class of functionals $R$.




\begin{lemma}\label{lem:positivity}
  Let the functional $R$ be $p$-convex, $R(x)=0$ only for $x=0$ and
  satisfy that for any $x$ tha value $\scp{\xi}{x}$ is independent of
  the choice of $\xi\in\partial R(x)$.
  If the data $y^\dagger$ (resp.~$y^\delta$) admits nonzero $\alpha^\ast$ for
  which $x_{\alpha^\ast}\neq0$, then $\mu_k^\dagger>0$ (resp.~$\mu_k>0$)
  for all $k\geq [\ln\alpha^\ast/\ln q]$.
\end{lemma}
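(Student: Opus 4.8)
The plan is to use $p$-convexity to convert the vanishing of $\mu_k^\dagger$ into an equality of consecutive regularized solutions, then to exploit the subgradient-independence hypothesis together with the two optimality conditions to force that common solution to be zero, and finally to contradict the hypothesis $x_{\alpha^\ast}\neq 0$ via a monotonicity argument. First I would record that $p$-convexity makes the Bregman distance positive definite, so that $\mu_k^\dagger=\bdistance{x_{q^k}}{x_{q^{k-1}}}=0$ forces $x_{q^k}=x_{q^{k-1}}=:\bar x$ (strict convexity also gives uniqueness of the minimizers $x_\alpha$, which I use throughout). The optimality of $x_{q^k}$ and of $x_{q^{k-1}}$ then yields both $-K^*(K\bar x-y^\dagger)/q^k\in\partial R(\bar x)$ and $-K^*(K\bar x-y^\dagger)/q^{k-1}\in\partial R(\bar x)$.

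Writing $s=\scp{-K^*(K\bar x-y^\dagger)}{\bar x}$ and invoking the assumption that $\scp{\xi}{\bar x}$ is independent of the choice $\xi\in\partial R(\bar x)$, I obtain $s/q^k=s/q^{k-1}$; since $q\in\,]0,1[\,$ gives $q^k\neq q^{k-1}$, this forces $s=0$. Next I would evaluate the Bregman distance from the origin to $\bar x$ using the admissible subgradient $\xi=-K^*(K\bar x-y^\dagger)/q^k\in\partial R(\bar x)$. Since $R(0)=0$ (the hypothesis $R(x)=0$ only for $x=0$ presumes $x=0$ solves $R=0$, and $R\geq 0$) and $\scp{\xi}{\bar x}=s/q^k=0$, the definition gives $\bdistance[\xi]{0}{\bar x}=R(0)-R(\bar x)+\scp{\xi}{\bar x}=-R(\bar x)$. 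Nonnegativity of the Bregman distance then forces $R(\bar x)\leq 0$, hence $R(\bar x)=0$, and the hypothesis on the zero set of $R$ yields $\bar x=0$; in particular $x_{q^k}=0$.

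It remains to contradict $x_{\alpha^\ast}\neq 0$. Here I would establish that the property $x_\alpha=0$ propagates to larger parameters. Indeed, $x_\alpha=0$ is equivalent to $K^*y^\dagger/\alpha\in\partial R(0)$; since $R(0)=0=\min R$ we have $0\in\partial R(0)$, and as $\partial R(0)$ is convex, scaling by $\alpha/\alpha'\in\,]0,1[\,$ shows $K^*y^\dagger/\alpha'\in\partial R(0)$, i.e. $x_{\alpha'}=0$ for every $\alpha'\geq\alpha$. The lower bound $k\geq[\ln\alpha^\ast/\ln q]$ guarantees $q^k\leq\alpha^\ast$, so combining $x_{q^k}=0$ with this monotonicity (applied with $\alpha=q^k$, $\alpha'=\alpha^\ast$) gives $x_{\alpha^\ast}=0$, the desired contradiction. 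Hence $\mu_k^\dagger>0$. The identical chain of reasoning, replacing $y^\dagger$, $x_{q^k}$ by $y^\delta$, $x_{q^k}^\delta$, establishes $\mu_k>0$.

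I expect the main obstacle to be the central step: correctly coupling the two optimality conditions with the subgradient-independence assumption to annihilate $\scp{-K^*(K\bar x-y^\dagger)}{\bar x}$, and then recognizing that evaluating the nonnegative Bregman distance \emph{at the origin} is exactly what converts this vanishing inner product into $R(\bar x)=0$. The positive-definiteness of the $p$-convex Bregman distance and the parameter monotonicity of $x_\alpha=0$ are comparatively routine, but both deserve to be stated explicitly, since they are precisely what links the local computation around $\bar x$ back to the global hypothesis on $\alpha^\ast$.
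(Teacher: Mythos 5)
Your proof is correct and rests on the same two pillars as the paper's: $p$-convexity reduces the claim to showing $x_{q^k}\neq x_{q^{k-1}}$, and the subgradient-independence assumption, applied to the two optimality conditions $-K^*(K\bar x-y^\dagger)/q^{k}\in\partial R(\bar x)$ and $-K^*(K\bar x-y^\dagger)/q^{k-1}\in\partial R(\bar x)$, rules out a common minimizer. You organize this contrapositively. The paper solves the pairing identity for $\alpha$, asserting without proof that $\scp{\xi_\alpha}{x_\alpha}\neq 0$ whenever $x_\alpha\neq 0$, and concludes that two distinct parameters below $\alpha^\ast$ cannot share a (necessarily nonzero) minimizer; you instead deduce $\scp{\xi}{\bar x}=0$ and convert this into $R(\bar x)=0$, hence $\bar x=0$, via the subgradient inequality at the origin --- which is exactly the one-line justification the paper omits for its nonvanishing claim. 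The two arguments also reconnect to the hypothesis $x_{\alpha^\ast}\neq 0$ differently: the paper compares Tikhonov functional values to show that $0$ cannot be a minimizer for $\alpha<\alpha^\ast$, whereas you propagate ``$0$ is a minimizer'' upward in $\alpha$ using $0\in\partial R(0)$ together with the convexity of $\partial R(0)$. Both mechanisms are valid; yours stays entirely at the level of optimality conditions and has the merit of making explicit the uniqueness of minimizers (from $p$-convexity) needed to pass from ``$0$ minimizes $\mathcal{J}_{\alpha^\ast}$'' to ``$x_{\alpha^\ast}=0$'', a point the paper uses tacitly.
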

\begin{proof}
  By the optimality condition for $x_\alpha$, we have
  \begin{equation*}
    -K^\ast(Kx_\alpha-y^\dagger)\in \alpha\partial R(x_\alpha).
  \end{equation*}
  By assumption, the value $\scp{\xi_\alpha}{x_\alpha}$ is independent
  of the choice of $\xi_\alpha\in\partial R(x_\alpha)$ and hence,
  taking duality pairing with $x_\alpha$ gives for any
  $\xi_\alpha\in\partial R(x_\alpha)$
  \begin{equation*}
    \scp{Kx_\alpha}{Kx_\alpha-y^\dagger}+\alpha
    \scp{\xi_\alpha}{x_\alpha}=0.
  \end{equation*}
  For non-zero $x_\alpha$ we have that $\scp{\xi_\alpha}{x_\alpha}$ is
  non-zero and hence, we get
  \begin{equation}\label{eqn:alpha}
   \alpha=\frac{\scp{Kx_\alpha}{Kx_\alpha-y^\dagger}}{\scp{\xi_\alpha}{x_\alpha}}
  \end{equation}
  Next by the assumption that the
  data $y^\dagger$ admits nonzero $\alpha^\ast$ for which
  $x_{\alpha^\ast}\neq0$, then for any $\alpha<\alpha^\ast$, $0$
  cannot be a minimizer of the Tikhonov functional. To see this, we
  assume that $0$ is a minimizer, i.e.,
  \begin{align*}
    \frac{1}{2}\|y^\dagger\|^2&= \frac{1}{2}\|K0-y^\dagger\|^2+\alpha R(0)
    <\frac{1}{2}\|Kx_{\alpha^\ast}-y^\dagger\|^2+\alpha^\ast
    R(x_{\alpha^\ast}),
  \end{align*}
  by the strict positivity of $R$ for nonzero $x$.
  This contradicts the minimality of $x_{\alpha^\ast}$.
  Now let
  $\alpha_1,\alpha_2<\alpha^\ast$ be distinct. Then both sets
  $\{x_{\alpha_1}\}$ and $\{x_{\alpha_2}\}$ contain no zero element.
  Next we show that the two sets are disjointed. Assume that
  $\{x_{\alpha_1}\}$ and $\{x_{\alpha_2}\}$ intersects nontrivially,
  i.e., there exists some nonzero $\tilde{x}$ such that $\tilde{x}\in
  \{x_{\alpha_1}\}\cap\{x_{\alpha_2}\}$. Then by equation
  \eqref{eqn:alpha} and choosing any $\tilde\xi\in\partial
  R(\tilde{x})$, we have
  \begin{equation*}
    \alpha_1=\frac{\scp{K\tilde x}{K\tilde x-y^\dagger}}{\langle
      \tilde\xi,\tilde{x}\rangle}=\alpha_2,
  \end{equation*}
  which is in contradiction with the distinctness of $\alpha_1$ and
  $\alpha_2$. Therefore, for distinct $\alpha_1,\alpha_2<\alpha^\ast$,
  the sets $\{x_{\alpha_1}\}$ and $\{x_{\alpha_2}\}$ are disjointed.
  Consequently, we have
  \begin{equation*}
    \|x_{\alpha_1}-x_{\alpha_2}\|>0.
  \end{equation*}
  Now by the $p$-convexity of $R$, we deduce for $q^k\leq \alpha^\ast$ that
  \begin{equation*}
    \mu^\dagger_k=D(x_{q^{k}},x_{q^{k-1}})\geq
    C\|x_{q^{k}}-x_{q^{k-1}}\|^p>0,
  \end{equation*}
  which shows the assertion for $\mu_k^\dagger$. The claim for $\mu_k$
  can be shown similarly.
\end{proof}

\begin{remark}
  The assumptions on $R$ in Lemma \ref{lem:positivity} are satisfied
  for many commonly used regularization functionals, e.g., $\|x\|_{\ell^p}$,
  $\|x\|_{L^p}$ with $p>1$ and the elastic-net functional \cite{Jin2009e}.
  However, the special case of $\|x\|_{\ell^1}$ is
  not covered. Indeed, the $\ell^1$ minimization can retrieve the
  support of the exact solution for sufficiently small noise level
  $\delta$ and $\alpha$, see \cite{Trede2010}. Consequently, both $\mu_k$ and
  $\mu^\dagger_k$ vanish for sufficiently large $k$, due to the lack
  of $p$-convexity. The bound $\alpha^\ast$ depends on $y(y^\delta)$,
  and for nonvanishing $y(y^\delta)$ can be either positive or $+\infty$, see
  \cite{jin2009c} for some discussions. The choice of $k_0$ should
  be related to $\alpha^\ast$ such that $\mu_{k_0}$ ($\mu_{k_0}^\dagger$)
  is nonzero.
\end{remark}

By combining the above two lemmas, we have the following important
corollary, which will play a key role in establishing the
convergence result.
\begin{corollary}
  \label{cor:alphaqo_to_zero} Under the conditions of Lemma
  \ref{lem:positivity}, the parameter $\alpha^{\mathrm{qo}}$ chosen by
  the quasi-optimality principle satisfies that for any sequence
  $\delta_n\to 0$ there holds that $\alpha^{\mathrm{qo}}\to 0$.
\end{corollary}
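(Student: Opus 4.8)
The plan is to argue by contradiction and show that the minimizing index produced by the quasi-optimality principle is forced to escape to $+\infty$ as the noise vanishes, so that $\alpha^{\mathrm{qo}}=q^{k^\ast}\to0$. Concretely, I would assume that $\alpha^{\mathrm{qo}}(\delta_n)$ does not converge to zero along some sequence $\delta_n\to0$, pass to a subsequence (not relabelled) on which $\alpha^{\mathrm{qo}}=q^{k^\ast}\geq c>0$, and translate this into a bound $k_0\leq k^\ast\leq K_1$ with $K_1:=\ln c/\ln q$ on the minimizing index. Shrinking $c$ if necessary so that $c\leq q^{k_0}$ keeps the window $[k_0,K_1]$ nonempty, and I would take $k_0\geq[\ln\alpha^\ast/\ln q]$ so that the positivity result applies throughout it. The strategy is then to contradict the membership $k^\ast\in[k_0,K_1]$ by comparing the behaviour of $(\mu_k)_k$ inside this fixed window against its values at a suitably chosen large index.

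The first ingredient is an \emph{a priori comparison index}. From estimate~\eqref{eq:two_reg_error} I have the bound $\mu_k\leq\tfrac{(1-q)^2}{2q}\phi(q^{k-1})$ already recorded in the motivation for the principle, and Proposition~\ref{prop:estimate_total_error} gives $\phi(q^{k-1})\leq(\delta q^{-(k-1)/2}+2q^{(k-1)/2}\norm{w})^2$. Choosing an admissible index $\tilde k_n\geq k_0$ in the usual way, so that $q^{\tilde k_n}\to0$ while $\delta_n^2/q^{\tilde k_n}\to0$ (exactly as in the proof of Theorem~\ref{thm:convergence_hanke_raus}), yields $\tilde k_n\to\infty$ and $\mu_{\tilde k_n}\to0$. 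Since $k^\ast$ minimizes $\mu_k$ over $k\geq k_0$ and $\tilde k_n\geq k_0$, the minimizing property gives $\mu_{k^\ast}\leq\mu_{\tilde k_n}\to0$.

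The second ingredient is a \emph{uniform positive lower bound} on $(\mu_k)_k$ inside the window. Over the finite range $k_0\leq k\leq K_1$, Lemma~\ref{lem:positivity} yields $\mu_k^\dagger>0$, so $m:=\min_{k_0\leq k\leq K_1}\mu_k^\dagger>0$, while Lemma~\ref{lem:qo_mu_approx} supplies $\sup_{k\leq K_1}\abs{\mu_k-\mu_k^\dagger}\to0$ as $\delta_n\to0$. Hence for all large $n$, using $k^\ast\in[k_0,K_1]$,
\[
\mu_{k^\ast}\geq\min_{k_0\leq k\leq K_1}\mu_k\geq m-\sup_{k\leq K_1}\abs{\mu_k-\mu_k^\dagger}\geq\tfrac{m}{2}>0,
\]
which contradicts $\mu_{k^\ast}\to0$. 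This contradiction forces $k^\ast\to\infty$, i.e.\ $\alpha^{\mathrm{qo}}=q^{k^\ast}\to0$.

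The step I expect to be the main obstacle is the construction of the vanishing comparison index $\tilde k_n$. Unlike the exact-data sequence, for which Lemma~\ref{lem:asymptotics_mu} gives $\mu_k^\dagger\to0$ as $k\to+\infty$, the noisy sequence $\mu_k$ need \emph{not} tend to zero as $k\to+\infty$ for fixed $\delta$, because the regularized solutions do not stabilise as $\alpha\to0$ in the presence of noise; so I cannot simply read off a small value of $\mu_k$ at a large index. The resolution is to couple the index to the noise level through the a priori balance $\delta_n^2/q^{\tilde k_n}\to0$, which is precisely what the residual bound in Proposition~\ref{prop:estimate_total_error} makes possible. The remaining care is bookkeeping: ensuring $\tilde k_n$ can be taken admissible and tending to $+\infty$, and that the window $[k_0,K_1]$ lies in the region where Lemma~\ref{lem:positivity} guarantees positivity.
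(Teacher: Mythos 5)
Your proof is correct, and its overall architecture coincides with the paper's: on any fixed finite window of indices, $\mu_k^\dagger$ is bounded below by a positive constant (Lemma~\ref{lem:positivity}) and $\mu_k$ inherits half of that bound for small $\delta$ via Lemma~\ref{lem:qo_mu_approx}, while the minimal value $\mu_{k^\ast}$ tends to zero, so the minimizing index must leave every such window. Where you genuinely deviate is in how you drive $\mu_{k^\ast}\to 0$: the paper fixes a comparison index $\underline{k}$ once and for all, uses Lemma~\ref{lem:asymptotics_mu} to make $\mu_{\underline{k}}^\dagger$ small, and then Lemma~\ref{lem:qo_mu_approx} to transfer this to $\mu_{\underline{k}}$ as $\delta\to 0$; you instead take a noise-adapted index $\tilde k_n$ with $q^{\tilde k_n}\sim\delta_n$ and bound $\mu_{\tilde k_n}$ directly through estimate~\eqref{eq:two_reg_error} combined with the residual bound of Proposition~\ref{prop:estimate_total_error}. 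Your route bypasses the decay of the exact-data sequence at $k\to+\infty$ (the second half of Lemma~\ref{lem:asymptotics_mu}) and, as a by-product, quantifies the decay of the minimal value as $\mu_{k^\ast}=\bigO(\delta_n)$; the paper's route is shorter given that Lemmas~\ref{lem:asymptotics_mu} and~\ref{lem:qo_mu_approx} are already in place and needs the approximation lemma only on fixed index ranges. Your diagnosis of the main pitfall --- that $\mu_k$ for fixed $\delta$ need not decay as $k\to+\infty$, so a naive large-index comparison fails --- is exactly right, and both resolutions (a fixed index with $\delta\to 0$ first, versus an index coupled to $\delta_n$) are legitimate. The only implicit assumptions you make (that $k_0$ is at least $[\ln\alpha^\ast/\ln q]$ so that Lemma~\ref{lem:positivity} applies on the window, and that a minimizing $k^\ast$ exists over the infinite range $k\geq k_0$) are shared by the paper's own proof.
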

\begin{proof}
  By definition it holds that $\alpha^{\mathrm{qo}} = q^{k^*}$ where
  $k^*$ is such that the sequence $\mu_k$ is minimal.

  Observe that $\mu_k\leq \mu^\dagger_k + \abs{\mu_k-\mu^\dagger_k}$.
  Now, let $\epsilon>0$. Due to Lemma~\ref{lem:asymptotics_mu} there
  holds that $\mu^\dagger_k\to 0$ for $k\to \infty$ and hence, there
  exists an integer $\underline k$ such that $\mu^\dagger_{\underline k}\leq
  \epsilon/2$. Moreover, due to Lemma~\ref{lem:qo_mu_approx}, for any $k_1$
  there is $\bar\delta>0$ such that $\abs{\mu_{k} - \mu^\dagger_{k}}\leq
  \epsilon/2$ for all $k\leq k_1$, in particular with $\underline{k}$.
  Hence $\mu_k \leq \mu^\dagger_k+|\mu_k^\dagger-\mu_k|<\varepsilon$
  for the same value of $\underline{k}$.

  By Lemma \ref{lem:positivity}, for any finite integer $k_1$, the set
  $\{\mu_k^\dagger\}_{k=k_0}^{k_1}$ is finite and positive, and thus
  there exists a constant $\sigma>0$ such that $\mu_k^\dagger>\sigma$ for
  $k=k_0,\dots,k_1$.  Lemma \ref{lem:qo_mu_approx} indicates that
  $\mu_k$ is larger than $\sigma/2$ for $k=k_0,\ldots,k_1$ and
  sufficiently small $\delta$. Thus the sequence $(\alpha^{\text{qo}})_{\delta_n}$ can
  contain terms on $\{q^k\}_{k=k_0}^{k_1}$ only if $\delta$ is not too
  small, since $\mu_k$ goes to zero as $\delta$ tends to zero. Since
  $k_1$ is chosen arbitrarily, this implies the desired assertion.
\end{proof}

As remarked earlier, it is in general impossible to show the
convergence of $x_\alpha^\delta \to x^\dagger$ for a heuristic
parameter choice in the context of worst-case scenario analysis. For
the quasi-optimality principle, Glasko et al~\cite{Glasko1984}
defined the notion of auto-regularizable set as a condition on the
exact as well as noisy data. In the case of the continuous
quasi-optimality principle this is the set of $y^\delta$ such that
\[
\frac{\norm{\alpha \frac{dx_\alpha^\delta}{d\alpha}-\alpha \frac{d
      x_\alpha}{d\alpha}}}{\norm{x_\alpha^\delta-x_\alpha}}\geq q >0
\]
holds uniformly in $\alpha$ and $\delta$. This abstract condition on
the exact data has been replaced by a condition on the noise
in~\cite{Bauer2009}.

In our setting, the following sets are helpful for proving convergence.
\begin{definition}
  For $r>0$, $q\in]0,1[$, $K:X\to Y$ and $y^\dagger\in\range K$ we define the sets
  \[
  \mathcal{D}_r = \set{y^\delta\in
    Y}{\forall k:\ \abs{\bdistance{x_{q^k}^\delta}{x_{q^{k-1}}^\delta} -
      \bdistance{x_{q^k}}{x_{q^{k-1}}}} \geq
    r\bdistance{x_{q^k}^\delta}{x_{q^k}}}.
  \]
\end{definition}
The condition $y^\delta\in \mathcal{D}_r$ can be regarded as a
discrete analogue of the above-mentioned auto-regularizable
condition. With the set $\mathcal{D}_r$ at hand, we can now show
another result on the asymptotic behavior of the quasi-optimality
sequence. The condition is that the noisy data belongs to some set
$\mathcal{D}_r$.
\begin{lemma}
  Let $y^\delta\in\mathcal{D}_r$ for some $r>0$ and assume that
  $R(x_\alpha^\delta)\to\infty\ \text{ for } \alpha\to 0.$ Then
  $\mu_k\to\infty$ for $k\to\infty$.
\end{lemma}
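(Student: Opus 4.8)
The plan is to show $\mu_k \to \infty$ by exploiting the defining property of $\mathcal{D}_r$, which bounds $\mu_k$ from below (up to the quantity $|\mu_k - \mu_k^\dagger|$) by $r$ times the data error $\bdistance{x_{q^k}^\delta}{x_{q^k}}$. The key observation is that the assumption $y^\delta\in\mathcal{D}_r$ gives directly
\[
\mu_k = \bdistance{x_{q^k}^\delta}{x_{q^{k-1}}^\delta} \geq |\mu_k - \mu_k^\dagger| - \mu_k^\dagger \geq r\,\bdistance{x_{q^k}^\delta}{x_{q^k}} - \mu_k^\dagger,
\]
so it suffices to show that the data error $\bdistance{x_{q^k}^\delta}{x_{q^k}}$ grows without bound as $k\to\infty$ (i.e.\ as $\alpha=q^k\to 0$), while $\mu_k^\dagger$ stays controlled. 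The term $\mu_k^\dagger$ is harmless: Lemma~\ref{lem:asymptotics_mu} already tells us $\mu_k^\dagger\to 0$ as $k\to\infty$, so it contributes nothing problematic in the limit.

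First I would unfold the data error via its definition and the source condition, writing $\bdistance{x_{q^k}^\delta}{x_{q^k}} = R(x_{q^k}^\delta) - R(x_{q^k}) - \scp{\xi_{q^k}}{x_{q^k}^\delta - x_{q^k}}$ with the natural subgradient $\xi_{q^k} = -K^*(Kx_{q^k}-y^\dagger)/q^k$. The hypothesis $R(x_\alpha^\delta)\to\infty$ as $\alpha\to 0$ drives the $R(x_{q^k}^\delta)$ term to infinity. I must then argue the remaining two terms do not cancel this growth. The term $R(x_{q^k})$ is the regularized solution for \emph{exact} data: I would show it stays bounded (or grows strictly more slowly), for instance by the minimizing property of $x_{q^k}$ against a fixed $R$-minimizing element $\tilde x$, giving $q^k R(x_{q^k}) \leq \tfrac12\|K\tilde x - y^\dagger\|^2 + q^k R(\tilde x)$, whence $R(x_{q^k})$ is controlled relative to the blow-up rate of $R(x_{q^k}^\delta)$.

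The main obstacle will be controlling the linear pairing term $\scp{\xi_{q^k}}{x_{q^k}^\delta - x_{q^k}}$, since $x_{q^k}^\delta - x_{q^k}$ itself may be large. Here I would use that $\scp{\xi_{q^k}}{x_{q^k}^\delta - x_{q^k}} = -\scp{Kx_{q^k}-y^\dagger}{K(x_{q^k}^\delta - x_{q^k})}/q^k$, and bound the inner product by Cauchy-Schwarz using the discrepancy estimates already available: $\norm{Kx_{q^k}-y^\dagger}\leq 2 q^k\norm{w}$ from~\eqref{eq:approxerror_disrc} and $\norm{K(x_{q^k}^\delta - x_{q^k})}\leq 2\delta$ from~\eqref{eq:dataerror_disrc}. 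This yields
\[
\bigl|\scp{\xi_{q^k}}{x_{q^k}^\delta - x_{q^k}}\bigr| \leq \frac{2q^k\norm{w}\cdot 2\delta}{q^k} = 4\norm{w}\delta,
\]
a bound uniform in $k$. Thus the pairing term is $\bigO(\delta)$ and does not interfere with the divergence of $R(x_{q^k}^\delta)$.

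Assembling these pieces, for fixed $\delta>0$ we obtain
\[
\mu_k \geq r\bigl(R(x_{q^k}^\delta) - R(x_{q^k}) - 4\norm{w}\delta\bigr) - \mu_k^\dagger,
\]
where as $k\to\infty$ the term $R(x_{q^k}^\delta)\to\infty$ by hypothesis, $R(x_{q^k})$ remains bounded, the $4\norm{w}\delta$ term is a fixed constant, and $\mu_k^\dagger\to 0$. Hence the right-hand side tends to $+\infty$, which forces $\mu_k\to\infty$ and completes the proof. The only delicate point to verify carefully is that the \emph{exact-data} regularization $R(x_{q^k})$ is genuinely bounded (rather than also blowing up); this follows because $y^\dagger\in\range K$ guarantees a feasible comparison element, so that $R(x_{q^k})$ converges to the finite value $R(x^\dagger)$ as $q^k\to 0$.
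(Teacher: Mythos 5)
Your proof is correct and follows essentially the same route as the paper: lower-bound $\mu_k$ via $\mu_k \geq |\mu_k-\mu_k^\dagger| - \mu_k^\dagger \geq r\,\bdistance{x_{q^k}^\delta}{x_{q^k}} - \mu_k^\dagger$, expand the data-error Bregman distance with $\xi_{q^k}=-K^*(Kx_{q^k}-y^\dagger)/q^k$, bound the pairing term by $4\norm{w}\delta$ using \eqref{eq:approxerror_disrc} and \eqref{eq:dataerror_disrc}, and invoke the boundedness of $R(x_{q^k})$ together with $\mu_k^\dagger\to 0$. You are in fact slightly more careful than the paper on two points — you get the sign of the $4\norm{w}\delta$ term right (the paper has a typo there) and you justify the boundedness of $R(x_{q^k})$ by comparison with $x^\dagger$ rather than merely asserting it.
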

\begin{proof}
  We observe that
  \begin{equation}
    \label{eq:aux_mu_k_infty}
    r\bdistance{x_{q^k}^\delta}{x_{q^k}}\leq
    \abs{\bdistance{x_{q^k}^\delta}{x_{q^{k-1}}^\delta} -
      \bdistance{x_{q^k}}{x_{q^{k-1}}}} = \abs{\mu_k - \mu^\dagger_k}.
  \end{equation}
  By the definition of the Bregman distance,~\eqref{eq:approxerror_disrc}
  and~\eqref{eq:dataerror_disrc} we have for $\xi_\alpha =
  -K^*(Kx_\alpha - y^\dagger)/\alpha$ that
  \begin{align*}
    \bdistance[\xi_\alpha]{x_\alpha^\delta}{x_\alpha} & =
    R(x_\alpha^\delta)-R(x_\alpha) - \scp{\xi_\alpha}{x_\alpha^\delta-x_\alpha}\\
    & = R(x_\alpha^\delta)-R(x_\alpha) + \tfrac1\alpha\scp{Kx_\alpha-y^\dagger}{K(x_\alpha^\delta-x_\alpha)}\\
    & \geq R(x_\alpha^\delta)-R(x_\alpha) + 4\norm{w}\delta.
  \end{align*}
  Since $R(x_\alpha)$ is bounded for $\alpha\to 0$ we see that by
  assumption that
  $\bdistance[\xi_\alpha]{x_\alpha^\delta}{x_\alpha}\to\infty$ for
  $\alpha\to 0$. This means that for $k\to\infty$ there holds that
  $\bdistance{x_{q^k}^\delta}{x_{q^k}}\to\infty$ and since
  $\mu^\dagger_k\to 0$, the claim follows from~\eqref{eq:aux_mu_k_infty}.
\end{proof}

Now we are in  position to show the main result of this section,
i.e., convergence for the quasi-optimality principle.
\begin{theorem}
  Let $(\delta_n)_n$, $\delta_n>0$, be a sequence converging to zero such that
  $y^{\delta_n}\to y^\dagger\in \range K$ and
  $y^{\delta_n}\in\mathcal{D}_r$ for some $r>0$. Let
  $(\alpha_n^{\mathrm{qo}}=\alpha_n^{\mathrm{qo}}(y^{\delta_n}))_n$ be the
  sequence of regularization
  parameters chosen by the quasi-optimality principle.
  Then
  \[
  \lim_{n\to\infty}\bdistance{x_{\alpha_n^{\mathrm{qo}}}^{\delta_n}}{x^\dagger}=0.
  \]
\end{theorem}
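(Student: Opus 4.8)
The plan is to establish convergence by analyzing the accumulation points of the sequence $(\alpha_n^{\mathrm{qo}})_n$, mirroring the strategy of Theorem~\ref{thm:convergence_hanke_raus}. By Corollary~\ref{cor:alphaqo_to_zero}, the condition $y^{\delta_n}\in\mathcal{D}_r$ together with the hypotheses of Lemma~\ref{lem:positivity} forces $\alpha_n^{\mathrm{qo}}\to 0$. This is the crucial input: unlike the Hanke-Raus case where one had to split into $\bar\alpha=0$ and $\bar\alpha>0$, here the auto-regularizability condition rules out the dangerous regime $\bar\alpha>0$ outright, so we only need to handle vanishing $\alpha_n^{\mathrm{qo}}$. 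First I would record that $\alpha_n^{\mathrm{qo}}\to 0$ and then split the total error via Corollary~\ref{cor:splitting_approx_data_error}:
\[
\bdistance{x_{\alpha_n^{\mathrm{qo}}}^{\delta_n}}{x^\dagger} \leq
\bdistance{x_{\alpha_n^{\mathrm{qo}}}^{\delta_n}}{x_{\alpha_n^{\mathrm{qo}}}} +
\bdistance{x_{\alpha_n^{\mathrm{qo}}}}{x^\dagger} + 6\norm{w}\delta_n,
\]
so it suffices to drive the data error, the approximation error, and the residual term each to zero.

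The approximation error and the $6\norm{w}\delta_n$ term are the easy pieces. From estimate~\eqref{eq:approxerror} we have $\bdistance{x_{\alpha_n^{\mathrm{qo}}}}{x^\dagger}\leq \tfrac{\norm{w}^2}{2}\alpha_n^{\mathrm{qo}}$, which tends to zero since $\alpha_n^{\mathrm{qo}}\to 0$; the residual term vanishes trivially as $\delta_n\to 0$. The main obstacle is the data error $\bdistance{x_{\alpha_n^{\mathrm{qo}}}^{\delta_n}}{x_{\alpha_n^{\mathrm{qo}}}}$, since estimate~\eqref{eq:dataerror} only gives the bound $\delta_n^2/(2\alpha_n^{\mathrm{qo}})$, and with both numerator and denominator tending to zero this quotient is not obviously controlled. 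The resolution is to exploit the defining property of $\mathcal{D}_r$: the inequality~\eqref{eq:aux_mu_k_infty} reads $r\,\bdistance{x_{q^k}^{\delta_n}}{x_{q^k}}\leq\abs{\mu_k-\mu_k^\dagger}$, so at the chosen index $k^*$ (where $\alpha_n^{\mathrm{qo}}=q^{k^*}$) we obtain
\[
\bdistance{x_{\alpha_n^{\mathrm{qo}}}^{\delta_n}}{x_{\alpha_n^{\mathrm{qo}}}} \leq
\tfrac{1}{r}\abs{\mu_{k^*}-\mu_{k^*}^\dagger}.
\]

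Thus the task reduces to showing $\abs{\mu_{k^*}-\mu_{k^*}^\dagger}\to 0$. Here I would combine two facts. Since $\alpha_n^{\mathrm{qo}}=q^{k^*}\to 0$, the index $k^*\to\infty$; by Lemma~\ref{lem:asymptotics_mu} we have $\mu_{k^*}^\dagger\to 0$. For $\mu_{k^*}$ itself, by the minimizing property defining the quasi-optimality principle, $\mu_{k^*}\leq\mu_k$ for every admissible $k$, and picking a fixed large $\underline{k}$ with $\mu_{\underline{k}}^\dagger$ small and invoking Lemma~\ref{lem:qo_mu_approx} to make $\abs{\mu_{\underline{k}}-\mu_{\underline{k}}^\dagger}$ small (exactly as in the proof of Corollary~\ref{cor:alphaqo_to_zero}) shows $\mu_{k^*}\to 0$ as well. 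Hence $\abs{\mu_{k^*}-\mu_{k^*}^\dagger}\leq\mu_{k^*}+\mu_{k^*}^\dagger\to 0$, which forces the data error to zero. With all three terms on the right-hand side of the splitting tending to zero, the theorem follows. The one technical subtlety to watch is that the index $k^*=k^*(n)$ varies with $n$, so the limits $\mu_{k^*}^\dagger\to 0$ and $\mu_{k^*}\to 0$ must be argued uniformly enough to accommodate this moving index, which is precisely why the uniform approximation statement of Lemma~\ref{lem:qo_mu_approx} over $k\leq k_1$ is needed rather than a pointwise-in-$k$ statement.
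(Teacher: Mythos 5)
Your proof is correct and follows essentially the same route as the paper: the splitting via Corollary~\ref{cor:splitting_approx_data_error}, the use of the defining inequality of $\mathcal{D}_r$ to bound the data error by $\tfrac1r\abs{\mu_{k_n}-\mu_{k_n}^\dagger}$, and Corollary~\ref{cor:alphaqo_to_zero} to kill the approximation error. The one place you genuinely diverge is in showing $\abs{\mu_{k_n}-\mu_{k_n}^\dagger}\to 0$: the paper simply cites Lemma~\ref{lem:qo_mu_approx}, but that lemma only gives uniformity over $k\leq k_1$ for a \emph{fixed} $k_1$, whereas here the index $k_n\to\infty$ (its proof produces bounds like $\delta^2/q^k$, which blow up as $k\to\infty$), so the citation does not directly apply. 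Your workaround --- showing $\mu_{k_n}^\dagger\to 0$ from Lemma~\ref{lem:asymptotics_mu} and $\mu_{k_n}\to 0$ from the minimizing property together with a fixed comparison index $\underline{k}$, then using $\abs{\mu_{k_n}-\mu_{k_n}^\dagger}\leq\mu_{k_n}+\mu_{k_n}^\dagger$ --- is more careful and actually repairs this small imprecision in the paper's argument. (Both you and the paper implicitly rely on the hypotheses of Lemma~\ref{lem:positivity} when invoking Corollary~\ref{cor:alphaqo_to_zero}, which the theorem statement does not list; that defect is inherited from the paper, not introduced by you.)
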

\begin{proof}
  Denote $\alpha_n^{\mathrm{qo}}$ by $q^{k_n}$. Then by using
  Corollary~\ref{cor:splitting_approx_data_error}, we derive
  \begin{align*}
    \bdistance{x_{q^{k_n}}^{\delta_n}}{x^\dagger} & \leq
    \bdistance{x_{q^{k_n}}^{\delta_n}}{x_{q^{k_n}}} +
    \bdistance{x_{q^{k_n}}}{x^\dagger} + 6\norm{w}\delta_n\\
    & \leq
    \frac{1}{r}\abs{\bdistance{x_{q^{k_n}}^\delta}{x_{q^{k_n-1}}^\delta}
      -
      \bdistance{x_{q^{k_n}}}{x_{q^{k_n-1}}}} + \bdistance{x_{q^{k_n}}}{x^\dagger} + 6\norm{w}\delta_n\\
    & = \frac{1}{r}\abs{\mu_{k_n} - \mu^\dagger_{k_n}} +
    \bdistance{x_{q^{k_n}}}{x^\dagger} + 6\norm{w}\delta_n.
  \end{align*}
  Now all three terms on the right hand side tend to zero for $n\to\infty$
  (the first due to Lemma~\ref{lem:qo_mu_approx} and the second due to
  $q^{k_n} = \alpha_n^{\mathrm{qo}}\to0$ by
  Corollary~\ref{cor:alphaqo_to_zero}).
\end{proof}
This theorem shows that it is possible that the quasi-optimality
principle leads to convergence in the setting of convex variational
regularization. However, the important question on how the sets
$\mathcal{D}_r$ look like, and especially, under what circumstance
they are non-empty, remains open. In~\cite{Glasko1984,Bauer2009} the
authors use spectral theory to investigate this issue -- a tool
which is unfortunately unavailable in our general setting.

\section{Numerical experiments}
\label{sec:numer-exper}

We conducted several experiments to illustrate our theoretical
findings.

\subsection{Experiment 1: Accuracy of the estimates}
\label{sec:experiment-1}

In the first experiment we show sharpness of the estimates of the
approximation, data and total errors. Especially we illustrate how
the function $\phi$ from the Hanke-Raus rule approximates the total
error.

The setting is as follows: We consider a deconvolution problem with
sparsity constraints. In particular, the space $X$ is a sequence
space $\ell^2$ and $Y$ is the Hilbert space $L^2[0,1]$. The operator
under consideration is $K=AB:\ell^2\to L^2[0,1]$ where
$A:L^2[0,1]\to L^2[0,1]$ is a circular convolution operator which
convolves with a characteristic function of an interval of width
$0.2$ and $B:\ell^2\to L^2[0,1]$ is a Haar wavelet synthesis
operator. Hence, the operator $K$ takes a square summable sequence
$x$, uses it as the expansion coefficients with respect to an
orthonormal Haar wavelet basis and afterwards performs a circular
convolution. The regularization function $R$ is the
$\|\cdot\|_{\ell^p}$ norm, i.e.,
\[
R(x) = \sum_k \abs{x_k}^p
\]
which has, for $p>1$, a single valued subgradient $\partial R(x) =
\sett{p\sign(x)\abs{x}^{p-1}}$. In particular we have chosen $p=1.2$
to promote sparsity of the minimizers
(cf.~\cite{daubechies2003iteratethresh}) and to get a $p$-convex
functional simultaneously. To construct a solution $x^\dagger$
fulfilling the source condition~\eqref{eq:source_condition1}, we
started with a function $w\in L^2[0,1]$ and set $\xi = K^*w$. Then
$x^\dagger$ was defined as
\[
x^\dagger_k = \sign(\xi_k)\abs{\xi_k/p}^{1/(p-1)}.
\]
We discretized the problem to 512 wavelet coefficients.
Figure~\ref{fig:experiment1_data} shows the chosen $w\in L^2[0,1]$,
the function $Bx^\dagger\in L^2[0,1]$ and the exact data $y^\dagger
= ABx^\dagger\in L^2[0,1]$. Both vectors $\xi$ and $x^\dagger$
consist of 165 non-zero coefficients, however, their plots are
noninformative.

\begin{figure}
  \centering
  \begin{tikzpicture}[xscale=2.6,yscale=2]
    \draw[->] (-.1,0) -- (1.1,0);
    \draw[->] (0,-.6) -- (0,1.1) node[anchor=west] {$w$};
    \foreach \x in {-0.5,1}
    \draw (-.02,\x) -- (.02,\x) node[anchor=east] {\footnotesize${\x}$};
    \draw (1,.05) -- (1,-.05) node[anchor=north] {\footnotesize$1$};
    \draw plot  file {data/experiment1/w.dat};
  \end{tikzpicture}
  \begin{tikzpicture}[xscale=2.6,yscale=.2]
    \draw[->] (-.1,0) -- (1.1,0);
    \draw[->] (0,-3) -- (0,10) node[anchor=west] {$Bx^\dagger$};
    \foreach \x in {-2,2,4,6,8}
    \draw (-.02,\x) -- (.02,\x) node[anchor=east] {\footnotesize${\x}$};
    \draw (1,.05) -- (1,-.05) node[anchor=north] {\footnotesize$1$};
    \draw plot  file {data/experiment1/xdagger.dat};
  \end{tikzpicture}
  \begin{tikzpicture}[xscale=2.6,yscale=.2]
    \draw[->] (-.1,0) -- (1.1,0);
    \draw[->] (0,-3) -- (0,10) node[anchor=west] {$y^\dagger$};
    \foreach \x in {-2,2,4,6,8}
    \draw (-.02,\x) -- (.02,\x) node[anchor=east] {\footnotesize${\x}$};
    \draw (1,.05) -- (1,-.05) node[anchor=north] {\footnotesize$1$};
    \draw plot  file {data/experiment1/ydagger.dat};
  \end{tikzpicture}

  \caption{Experiment 1: Left: $w$ from the source condition. Middle:
    $Bx^\dagger$. Right: $y^\dagger$.}
  \label{fig:experiment1_data}
\end{figure}
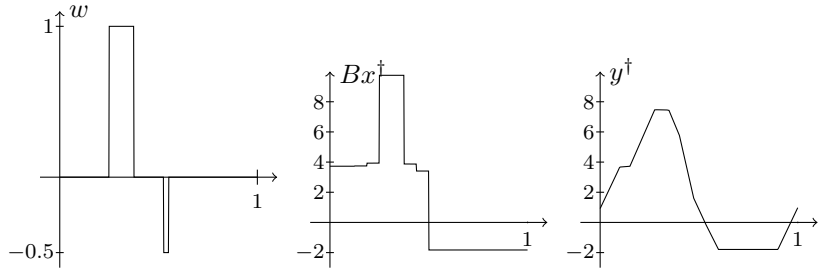

For a fixed noise level $\delta=0.02$, we generated noisy data
$y^\delta$ such that $\norm{y^\dagger-y^\delta}=\delta$. Then we
calculated minimizers $x_\alpha^\delta$ and $x_\alpha$ of the
Tikhonov functional with data $y^\delta$ and $y^\dagger$,
respectively, for different values of $\alpha$ with the combined
iterative hard- and soft-thresholding
from~\cite{lorenz2008conv_speed_sparsity}
(see~\cite{bredies2008harditer} for the iterative hard-thresholding
algorithm
and~\cite{daubechies2003iteratethresh,bredies2008itersoftconvlinear}
for the iterative soft-thresholding algorithm, the code is available
at
\url{http://www-public.tu-bs.de:8080/~dirloren/progs/iter_thresh.m}).
We calculated the different errors and the function $\phi$ from the
Hanke-Raus rule and show them in
Figure~\ref{fig:experiment1_results}. We observe that the function
$\phi$ captures the behavior of the total error very well. Moreover,
the sum of the approximation and data errors is close to the total
error. Surprisingly, the estimate from
Proposition~\ref{prop:estimate_total_error} is even closer to the
function $\phi$ than the total error itself---a result which is not
backed up by theory by now.

\begin{figure}
  \centering
  \begin{tikzpicture}[xscale=1,yscale=1]
    \draw[->] (-4.5,-6) -- (0.5,-6) node[below] {$\alpha$};
    \foreach \x in {-4,...,0}
    \draw (\x,-5.9) -- (\x,-6.1) node[below] {\footnotesize$10^{\x}$};
    \draw[->] (-4.5,-6) -- (-4.5,2);
    \foreach \y in {-5,...,1}
    \draw (-4.4,\y) -- (-4.6,\y) node[left] {\footnotesize$10^{\y}$};
    \draw[red] plot file {data/experiment1/bdist_recerror.dat};
    \draw[red] (0,-0.2) node[right] {$\bdistance{x_\alpha^\delta}{x^\dagger}$};
    \draw[blue] plot file {data/experiment1/bdist_dataerror.dat};
    \draw[blue] (-1,-5.5) node[right] {$\bdistance{x_\alpha^\delta}{x_\alpha}$};
    \draw[orange] plot file {data/experiment1/bdist_apperror.dat};
    \draw[orange] (-3.5,-4.5) node[below] {$\bdistance{x_\alpha}{x^\dagger}$};
    \draw[brown] plot file {data/experiment1/HR_estimator.dat};
    \draw[brown] (0,2) node[right] {$\phi(\alpha)$};
    \draw[green] plot file {data/experiment1/bdist_recerror_estimator.dat};
    \draw[green] (0,1.4) node[right] {$(\delta/\sqrt(\alpha) + \sqrt(\alpha)\norm{w})^2/2$};
  \end{tikzpicture}
  \caption{Experiment 1: Illustration of the different errors in log-log scale.}
  \label{fig:experiment1_results}
\end{figure}
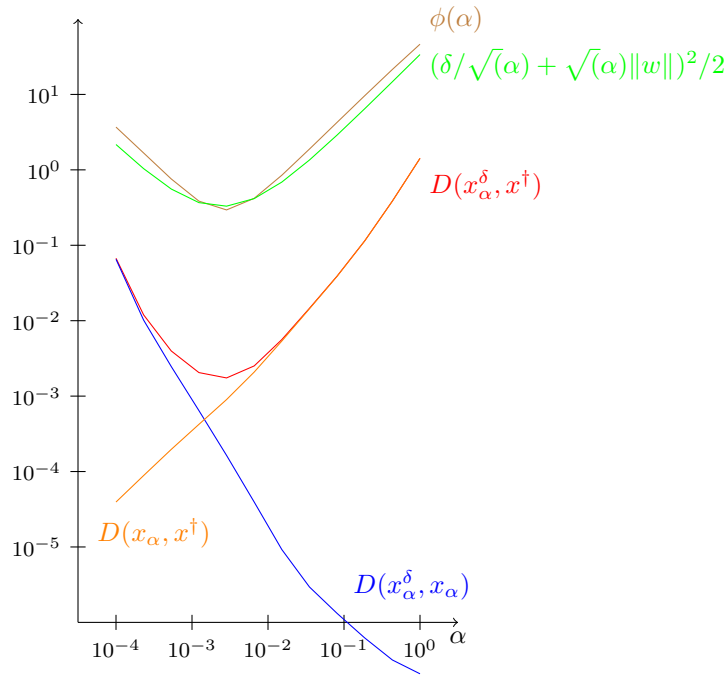

\begin{remark}
  The obtained results have been observed to be robust with respect to
  different noise realizations and different $w$ (if the obtained
  sparsity of the corresponding $x^\dagger$ is comparable).
\end{remark}

\subsection{Experiment 2: The Hanke-Raus rule}
\label{sec:experiment-2}

In this experiment we illustrate the performance of the Hanke-Raus
rule. We use the same set up as in the first experiment, i.e.,~the
same $x^\dagger$ and $K$. For a range of $\delta$ we generated noisy
data $y^\delta$ and calculated the regularization parameter
$\alpha^{\mathrm{HR}}$ with the Hanke-Raus rule of
Section~\ref{sec:parameter-choice-a} in a brute-force manner: we
tested values for $\alpha$ on a logarithmically uniform grid. As the
exact solution $x^\dagger$ is known in this case, we also calculated
the optimal regularization parameter $\alpha^{\mathrm{opt}}$,
i.e.,~the parameter $\alpha$ for which the error
$\bdistance{x_\alpha^\delta}{x^\dagger}$ is smallest, see
Figure~\ref{fig:experiment2_results} for the results. It is observed
that the Hanke-Raus parameter follows the optimal parameter closely
in this example and accordingly the error of the Hanke-Raus rule is
close to the optimal error.


\begin{figure}
  \centering
  \begin{tikzpicture}[xscale=.8,yscale=.8]
    \draw[->] (-5,-6) -- (-0.2,-6) node[below] {$\delta$};
    \foreach \x in {-4,...,-1}
    \draw (\x,-5.9) -- (\x,-6.1) node[below] {\footnotesize$10^{\x}$};
    \draw[->] (-5,-6) -- (-5,-.5);
    \foreach \y in {-5,...,-1}
    \draw (-4.9,\y) -- (-5.1,\y) node[left] {\footnotesize$10^{\y}$};
    \draw[red] plot file {data/experiment2/alpha_HR.dat};
    \draw[red] (-1,-1.6) node[right] {$\alpha^{\mathrm{HR}}$};
    \draw[blue] plot file {data/experiment2/alpha_opt.dat};
    \draw[blue] (-1,-2.2) node[right] {$\alpha^{\mathrm{opt}}$};
  \end{tikzpicture}~%
  \begin{tikzpicture}[xscale=0.8,yscale=0.8]
    \draw[->] (-5,-7) -- (-0.2,-7) node[below] {$\delta$};
    \foreach \x in {-4,...,-1}
    \draw (\x,-6.9) -- (\x,-7.1) node[below] {\footnotesize$10^{\x}$};
    \draw[->] (-5,-7) -- (-5,-1.5);
    \foreach \y in {-6,...,-2}
    \draw (-4.9,\y) -- (-5.1,\y) node[left] {\footnotesize$10^{\y}$};
    \draw[red] plot file {data/experiment2/error_HR.dat};
    \draw[red] (-1,-1.8) node[right] {$\bdistance{x_{\alpha^{\mathrm{HR}}}^\delta}{x^\dagger}$};
    \draw[blue] plot file {data/experiment2/error_opt.dat};
    \draw[blue] (-1,-2.4) node[right] {$\bdistance{x_{\alpha^{\mathrm{opt}}}^\delta}{x^\dagger}$};
  \end{tikzpicture}
  \caption{Experiment 2: Left: The regularization parameter by the Hanke-Raus rule
  and the optimal parameter in dependence of $\delta$. Right: The corresponding errors.}
  \label{fig:experiment2_results}
\end{figure}
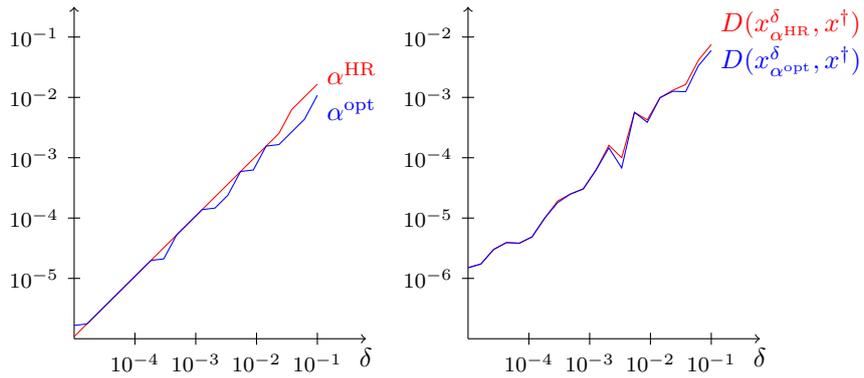

\subsection{Experiment 3: The quasi-optimality principle}
\label{sec:experiment-3}

This time the operator $K$, the data $x^\dagger$ and the
regularization function $R$ is again similar to Experiments~1 and 2.
Here we analyze how the quasi-optimality principle from
Section~\ref{sec:quasi-optim-princ} performs in practice. We chose
$\alpha_0 = 100\cdot\delta$ and $q=0.8$. Then we calculated
minimizers $x_{q^k\alpha_0}^\delta$ for several values of $k$ and
chose $\alpha^{\mathrm{qo}} = q^k\alpha_0$ as the one which
minimized
$\bdistance{x_{q^k\alpha_0}^\delta}{x_{q^{k-1}\alpha_0}^\delta}$.
Again, we also calculated the optimal value $\alpha^{\mathrm{opt}}$
of the regularization parameter and the corresponding errors, see
Figure~\ref{fig:experiment3_results} for the results. Again we
observed that this choice follows the optimal regularization
parameter closely and can produce accurate solutions.

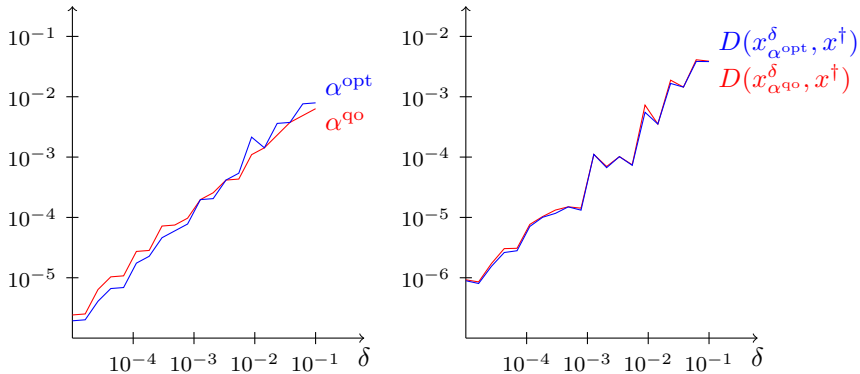
\begin{figure}
  \centering
  \begin{tikzpicture}[xscale=.8,yscale=.8]
    \draw[->] (-5,-6) -- (-0.2,-6) node[below] {$\delta$};
    \foreach \x in {-4,...,-1}
    \draw (\x,-5.9) -- (\x,-6.1) node[below] {\footnotesize$10^{\x}$};
    \draw[->] (-5,-6) -- (-5,-.5);
    \foreach \y in {-5,...,-1}
    \draw (-4.9,\y) -- (-5.1,\y) node[left] {\footnotesize$10^{\y}$};
    \draw[red] plot file {data/experiment3/alpha_qo.dat};
    \draw[red] (-1,-2.4) node[right] {$\alpha^{\mathrm{qo}}$};
    \draw[blue] plot file {data/experiment3/alpha_opt.dat};
    \draw[blue] (-1,-1.8) node[right] {$\alpha^{\mathrm{opt}}$};
  \end{tikzpicture}~%
  \begin{tikzpicture}[xscale=0.8,yscale=0.8]
    \draw[->] (-5,-7) -- (-0.2,-7) node[below] {$\delta$};
    \foreach \x in {-4,...,-1}
    \draw (\x,-6.9) -- (\x,-7.1) node[below] {\footnotesize$10^{\x}$};
    \draw[->] (-5,-7) -- (-5,-1.5);
    \foreach \y in {-6,...,-2}
    \draw (-4.9,\y) -- (-5.1,\y) node[left] {\footnotesize$10^{\y}$};
    \draw[red] plot file {data/experiment3/error_qo.dat};
    \draw[red] (-1,-2.7) node[right] {$\bdistance{x_{\alpha^{\mathrm{qo}}}^\delta}{x^\dagger}$};
    \draw[blue] plot file {data/experiment3/error_opt.dat};
    \draw[blue] (-1,-2.1) node[right] {$\bdistance{x_{\alpha^{\mathrm{opt}}}^\delta}{x^\dagger}$};
  \end{tikzpicture}
  \caption{Experiment 3: Left: The regularization parameter by
    the quasi-optimality criterion and the optimal parameter in
    dependence of $\delta$. Right: The corresponding errors.}
  \label{fig:experiment3_results}
\end{figure}

\subsection{Experiment 4: Deblurring with elastic net}
\label{sec:experiment-4}


In this experiment we used a standard problem from the Regularization
Tools toolbox by P.C. Hansen~\cite{Hansen2007}, namely the
\texttt{blur} problem. We used the parameters \texttt{N=50},
\texttt{band=5}, \texttt{sigma=1.2} and employed the so-called
elastic-net regularization~\cite{Zou.etal:2008,Jin2009e}, that is a
penalty term
\[
R(x) = \norm[1]{x} + \frac{\eta}{2}\norm[2]{x}^2.
\]
On the one hand, this weighted sum of the one- and the two-norm can
be seen as a stabilization for one-norm regularization and on the
other hand, it leads to a kind of grouping effect, see
also~\cite{Zou.etal:2008,Jin2009e}.

We generated a noisy image $y^\delta$ (with $\delta=0.1$) and fixed
$\eta=10^{-3}$.  We used a regularized semismooth Newton method
(proposed in~\cite{griesse2008ssnsparsity} for the case $\eta=0$ and
generalized to $\eta>0$ in~\cite{Jin2009e}). Then we calculated
solutions for a range of $\alpha$ and determined the regularization
parameters according to the Hanke-Raus rule and the quasi-optimality
criterion.  Moreover, we calculated the parameter according to the
discrepancy principle~\cite{Morozov1966} (to compare with a
non-heuristic a-posteriori rule) and the optimal regularization
parameter with respect to the norm and the Bregman distance. We
report the results in Table~\ref{tab:blur_results} and
Figure~\ref{fig:blur_results}.

\begin{table}
  \centering
  \caption{Results for experiment 4.}
  \begin{tabular}{@{}lccc@{}}
 \toprule
  & $\alpha$ & $\bdistance{x_\alpha^\delta}{x^\dagger}$ & $\norm[2]{x_\alpha^\delta-x^\dagger}$ \\
\midrule
 smallest Bregman distance & \verb|1.10e-02| & \verb|5.54e-02| & \verb|1.02e+01| \\
 smallest norm & \verb|3.20e-03| & \verb|7.39e-02| & \verb|7.36e+00| \\
 Hanke-Raus & \verb|2.61e-03| & \verb|9.03e-02| & \verb|7.47e+00| \\
 quasi-optimality & \verb|3.02e-03| & \verb|7.51e-02| & \verb|7.38e+00| \\
 discrepancy & \verb|9.29e-04| & \verb|7.16e-01| & \verb|1.01e+01| \\
\bottomrule
\end{tabular}

  \label{tab:blur_results}
\end{table}

\begin{figure}
  \centering
  \begin{tabular}{ccc}
    \includegraphics[width=4cm]{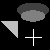} &
    \includegraphics[width=4cm]{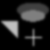} &
    \includegraphics[width=4cm]{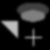} \\
    $x^\dagger$ &
    $y^\dagger$ &
    $y^\delta$ \\
    \includegraphics[width=4cm]{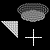} &
    \includegraphics[width=4cm]{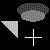} &
    \\
    smallest Bregman distance &
    smallest norm &
    \\
    \includegraphics[width=4cm]{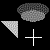} &
    \includegraphics[width=4cm]{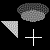} &
    \includegraphics[width=4cm]{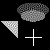} \\
    HR-rule &
    quasi-optimality &
    discrepancy principle
  \end{tabular}
  \caption{Results for the \texttt{blur} problem for the Hanke-Raus
    rule and the quasi-optimality criterion.}
  \label{fig:blur_results}
\end{figure}

We observe that all rules produce reasonable results and perform
comparably in terms of visual inspection. However, the numbers say a
little bit more: The discrepancy principle chooses a parameter which
is a bit too small and leads to larger errors both in terms of the
Bregman distance and the norm. The Hanke-Raus rule and the
quasi-optimality principle choose comparable parameters while the
quasi-optimality principle performs slightly better. Moreover, the
errors by the two proposed rules agree excellently with the optimal
one both in terms the Bregman distance and norm.

\section{Conclusion}
\label{sec:conclusion}

We have derived two error estimate-based heuristic parameter choice
rules for general convex variational regularization on the basis of
a refined analysis of the regularization process. These rules
reproduce the Hanke-Raus rule and the quasi-optimality criterion for
the conventional quadratic regularization. A posteriori error
estimates have been derived for the Hanke-Raus rule using the
Bregman distance. The convergence of both rules are discussed by
imposing conditions on the noisy data. Numerical results have
verified some theoretical findings and showed the effectiveness of
these rules. An important future research problem is to develop
efficient algorithms to numerically realize these rules. This is
nontrivial because the functionals under consideration are often
nonsmooth and there exists only an implicit relation between the
solution $x_\alpha^\delta$ and the regularization parameter
$\alpha$.

\bibliographystyle{plain}
\bibliography{literatur}

\end{document}